\def\l@subsection{\@tocline{2}{0pt}{2.5pc}{2.5pc}{}}
\def\chapter{\clearpage\thispagestyle{plain}\global\@topnum\z@ 
\@afterindenttrue \secdef\@chapter\@schapter}
\newtheorem{thmgl} {Theorem}    
\newtheorem{propgl}{Proposition}
\newtheorem{lemgl} {Lemma}
\newtheorem{corgl} {Corollary}
\newtheorem{defgl} {Definition}
\theoremstyle{definition}
\newtheorem{remgl} {Remark}
\newtheorem{remsgl} [remgl]{Remarks}
\newcommand{\mf}{\mathfrak}
\newcommand{\mc}{\mathcal}
\newcommand{\mb}{\mathbb}
\newcommand{\nts}{\negthinspace}     
\newcommand{\Nts}{\nts\nts}
\newcommand{\ov}{\overline}
\newcommand{\ot}{\otimes}           
\newcommand{\la}{\langle}
\newcommand{\ra}{\rangle}
\newcommand{\Hom}{{\rm Hom}}        
\newcommand{\Ext}{{\rm Ext}}
\newcommand{\ind}{{\rm ind}}
\newcommand{\Sym}{{\rm Sym}} 
\let\ttie\t
\newcommand{\tie}[1]{{\let\t\ttie \ttie#1}}
\renewcommand{\t}{\mf{t}}  
\newcommand{\GL}{{\rm GL}}
\newcommand{\SL}{{\rm SL}}
\newcommand{\Sp}{{\rm Sp}} 
\newcommand{\ve}{\varepsilon}
\def\vcdots{\vbox{\baselineskip4\p@ \lineskiplimit\z@
\kern3\p@\hbox{.}\hbox{.}\hbox{.}\Nts\nts\kern3\p@}}
\begin{document}

\title{Injective and tilting resolutions and a Kazhdan-Lusztig theory for the general linear and symplectic group}

\begin{abstract}
Let $k$ be an algebraically closed field of characteristic $p>0$ and let $G$ be a symplectic or general linear group over $k$. We consider induced modules for $G$ under the assumption that $p$ is bigger than the greatest hook length in the partitions involved. We give explicit constructions of left resolutions of induced modules by tilting modules. Furthermore, we give injective resolutions for induced modules in certain truncated categories. We show that the multiplicities of the indecomposable tilting and injective modules in these resolutions are the coefficients of certain Kazhdan-Lusztig polynomials. We also show that our truncated categories have a Kazhdan-Lusztig theory in the sense of Cline, Parshall and Scott. This builds further on work of Cox-De Visscher and Brundan-Stroppel.
\end{abstract}

\author[R.\ Tange]{Rudolf Tange}
\address
{School of Mathematics,
University of Leeds,
LS2 9JT, Leeds, UK}
\email{R.H.Tange@leeds.ac.uk}

\keywords{general linear group, symplectic group, induced modules, tilting modules, Kazhdan-Lusztig polynomials}
\subjclass[2020]{20G05}
\maketitle
\markright{\MakeUppercase{{Injective and tilting resolutions for induced modules}}}

\section*{Introduction}
In this paper we study modules for the general linear group $\GL_n$ and for the symplectic group $\Sp_n$. This is a continuation of \cite{T} ($\GL_n$) and \cite{LT} ($\Sp_n$) where we described good filtration multiplicities in indecomposable tilting modules and decomposition numbers in terms of certain cap or cap-curl diagrams and codiagrams.
In the present paper we want to use the same combinatorics to define certain Kazhdan-Lusztig polynomials which we then use to give explicit constructions of left resolutions of induced modules by tilting modules and of injective resolutions for induced modules in certain truncated categories. This is based on work of Cox-De Visscher \cite{CdV} and Brundan-Stroppel \cite{BS}. Throughout we assume that $p$ is bigger than the greatest hook length in the partitions involved.

The paper is organised as follows. In Section~\ref{ss.reductive_groups} we describe the necessary background from the theory of reductive groups and their representations and recall some notation and an important result from \cite{T} and \cite{LT} about a quasihereditary algebra for the partial order $\preceq$.
In Section~\ref{ss.arrow_diagrams} we recall the notions of arrow diagrams and cap and cap-curl diagrams from \cite{T} and \cite{LT}. Then we discuss some combinatorial tools to characterise the partial order $\preceq$ in terms of arrow diagrams, and finally we discuss cap and cap-curl diagrams associated to two weights and the codiagram versions. In Section~\ref{ss.translation} we recall the definition of certain translation functors and, in the case of $\Sp_n$, refined translation functors, and we state two important results from \cite{T} and \cite{LT}: Proposition~\ref{prop.trans_equivalence} on ``translation equivalence" and Proposition~\ref{prop.trans_projection} on ``translation projection".

In Section~\ref{s.polynomials} we introduce the Kazhdan-Lusztig polynomials $d_{\lambda\mu}$ and $p_{\lambda\mu}$ which are characteristic $p$ analogues of the equally named polynomials in \cite{BS} and \cite{CdV}. We state and prove certain elementary properties of these polynomials. We do the same for the polynomials $e_{\lambda\mu}$ and $r_{\lambda\mu}$. The main properties are the recursive relations in Lemma~\ref{lem.d_and_e_eqn} and Proposition~\ref{prop.p_and_r_eqn}.
Our polynomials are not compatible with the parabolic setup $(W_p,W)$ from Lusztig's conjecture, but with $(W_p(A_{s_1+s_2-1}),W(A_{s_1-1})\times W(A_{s_2-1}))$ for $\GL_n$ and $(W_p(D_s),W(A_{s-1}))$ for $\Sp_n$. Our combinatorics of arrow diagrams only describes a finite set of coset representatives corresponding to our set $\Lambda_p$.\footnote{Note that Lusztig's conjecture also concerns a finite set of coset representatives corresponding to the Jantzen region.}
In Remark~\ref{rems.polynomials}.3 we show that our polynomials are products of two, combinatorially defined, ``ordinary" parabolic Kazhdan-Lusztig polynomials as in \cite{Boe}, \cite{CdV} and, in the $\GL_n$-case, \cite{BS}. The parabolic type of the latter is $(W(A_{s_1+s_2-1}),W(A_{s_1-1})\times W(A_{s_2-1}))$ for $\GL_n$ and $(W(D_s),W(A_{s-1}))$ for $\Sp_n$. Note that Boe \cite{Boe} has shown that the KL-polynomials of type $(W(D_s),W(A_{s-1}))$ coincide with the KL-polynomials of type $(W(C_s),W(A_{s-1}))$. To write our polynomials as a product of ``ordinary" KL-polynomials we need to split the arrow diagram into two parts: the \emph{associated pair of $(\land,\vee)$-sequences}, see Section~\ref{ss.arrow_diagrams}. This ``splitting" depends on $p$.

In Section~\ref{s.resolutions} we show that the coefficients of the polynomials $p_{\lambda\mu}$ are the multiplicities of indecomposable tilting modules in certain finite left tilting resolutions of induced modules, see Theorem~\ref{thm.tilting_resolution}. Furthermore, we show that the coefficients of the polynomials $r_{\lambda\mu}$ are the multiplicities of indecomposable injective modules in certain finite injective resolutions of induced modules in certain truncated categories, see Theorem~\ref{thm.injective_resolution}. These are analogues of results in \cite{BS} and \cite{CdV}. We also show that the categories $\mc C_\Lambda$ that we consider have a Kazhdan-Lusztig theory in the sense of Cline, Parshall and Scott \cite{CPS}. This means that the dimensions of the $\Ext_G^i(L(\lambda),L(\mu))$ can be espressed in terms of our polynomials $r_{\lambda\mu}$.

In Section~\ref{s.limiting_case} we show that when we fix the residue of $n$ mod $p$, we can derive a stability result for arrow diagrams when $p\gg0$. This shows in particular that our results \cite[Cor to Thm~6.1 and Prop 8.3]{T} and \cite[Cor to Thm~6.1]{LT} on the decomposition numbers of the walled and ordinary Brauer algebra in characteristic $p$ coincide for $p\gg0$ with the description of the decomposition numbers of the walled and ordinary Brauer algebra in characteristic $0$ from Cox and de Visscher \cite[Thm~4.10 and 5.8]{CdV}.

\section{Preliminaries}\label{s.prelim}
\subsection{Reductive groups}\label{ss.reductive_groups}
First we recall some general notation from \cite{LT} and \cite{T}. Throughout this paper $G$ is a reductive group over an algebraically closed field $k$ of characteristic $p>0$, $T$ is a maximal torus of $G$ and $B^+$ is a Borel subgroup of $G$ containing $T$.
We denote the group of weights relative to $T$, i.e. the group of characters of $T$, by $X$. For $\lambda,\mu\in X$ we write $\mu\le\lambda$ if $\lambda-\mu$ is a sum of positive roots (relative to $B^+$). The Weyl group of $G$ relative to $T$ is denoted by $W$ and the set of dominant weights relative to $B^+$ is denoted by $X^+$. In the category of (rational) $G$-modules, i.e. $k[G]$-comodules, there are several special families of modules. For $\lambda\in X^+$ we have the irreducible $L(\lambda)$ of highest weight $\lambda$, and the induced module $\nabla(\lambda)=\ind_B^Gk_\lambda$, where $B$ is the opposite Borel subgroup to $B^+$ and $k_\lambda$ is the 1-dimensional $B$-module afforded by $\lambda$. The Weyl module and indecomposable tilting module associated to $\lambda$ are denoted by $\Delta(\lambda)$ and $T(\lambda)$. To each $G$-module $M$ we can associate its formal character ${\rm ch}\,M=\sum_{\lambda\in X}\dim M_\lambda e(\lambda)\in(\mb Z X)^W$, where $M_\lambda$ is the weight space associated to $\lambda$ and $e(\lambda)$ is the basis element corresponding to $\lambda$ of the group algebra $\mb Z X$ of $X$ over $\mb Z$. Composition and good or Weyl filtration multiplicities are denoted by $[M:L(\lambda)]$ and $(M:\nabla(\lambda))$ or $(M:\Delta(\lambda))$. For a weight $\lambda$, the character $\chi(\lambda)$ is given by Weyl's character formula \cite[II.5.10]{Jan}. If $\lambda$ is dominant, then ${\rm ch}\,\nabla(\lambda)={\rm ch}\,\Delta(\lambda)=\chi(\lambda)$. The $\chi(\lambda)$, $\lambda\in X^+$, form a $\mb Z$-basis of $(\mb Z X)^W$.
For $\alpha$ a root and $l\in\mb Z$, let $s_{\alpha,l}$ be the affine reflection of $\mb R\ot_{\mb Z}X$ defined by $s_{\alpha,l}(x)=x-a\alpha$, where $a=\la x,\alpha^\vee\ra-lp$. Mostly we replace $\la-,-\ra$ by a $W$-invariant inner product and then the cocharacter group of $T$ is identified with a lattice in $\mb R\ot_{\mb Z}X$ and $\alpha^\vee=\frac{2}{\la\alpha,\alpha\ra}\alpha$.
We have $s_{-\alpha,l}=s_{\alpha,-l}$ and the affine Weyl group $W_p$ is generated by the $s_{\alpha,l}$.
Choose $\rho\in\mathbb Q\ot_{\mb Z}X$ with $\la\rho,\alpha^\vee\ra=1$ for all $\alpha$ simple and define the dot action of $W_p$ on $\mb R\ot_{\mb Z}X$ by $w\cdot x=w(\lambda+\rho)-\rho$. The lattice $X$ is stable under the dot action.
The \emph{linkage principle} \cite[II.6.17,7.2]{Jan} says that if $L(\lambda)$ and $L(\mu)$ belong to the same $G$-block, then $\lambda$ and $\mu$ are $W_p$-conjugate under the dot action. We refer to \cite{Jan} part II for more details.

Unless stated otherwise, $G$ will be the general linear group $\GL_n$ or the symplectic group $\Sp_n$, $n=2m$, given by $\Sp_n=\{A\in\GL_n\,|\,A^TJA=J\}$, where $J=\begin{bmatrix}0&I_m\\-I_m&0\end{bmatrix}$ and $A^T$ is the transpose of $A$. The natural $G$-module $k^n$ is denoted by $V$.
Partitions with parts $<10$ may be written in ``exponential form": $(5,5,4,3,2)$ is denoted by $(5^2432)$, where we sometimes omit the brackets.

First assume $G=\GL_n$. We let $T$ be the group of diagonal matrices in $\GL_n$. Then $X$ is naturally identified with $\mb Z^n$ such that the $i$-th diagonal coordinate function corresponds to the $i$-th standard basis element $\ve_i$ of $\mb Z^n$. We let $B^+$ be the Borel subgroup of invertible upper triangular matrices corresponding to the set of positive roots $\ve_i-\ve_j$, $1\le i<j\le n$. Then a weight in $\mb Z^n$ is dominant if and only if it is weakly decreasing.
Such a weight $\lambda$ can uniquely be written as
$$[\lambda^1,\lambda^2]\stackrel{\rm def}{=}(\lambda^1_1,\lambda^1_2,\ldots,0,\ldots,0,\ldots,-\lambda^2_2,-\lambda^2_1)$$
where $\lambda^1=(\lambda^1_1,\lambda^1_2,\ldots)$ and $\lambda^2=(\lambda^2_1,\lambda^2_2,\ldots)$ are partitions with $l(\lambda^1)+l(\lambda^2)\le n$. Here $l(\xi)$ denotes the length of a partition $\xi$. So $X^+$ can be identified with pairs of partitions $(\lambda^1,\lambda^2)$ with $l(\lambda^1)+l(\lambda^2)\le n$.
We will also identify partitions with the corresponding Young diagrams. For $s_1,s_2\in\{1,\ldots,n\}$ with $s_1+s_2\le n$ we denote the subgroup of $W_p$ generated by the $s_{\alpha,l}$, $\alpha=\ve_i-\ve_j$, $i,j\in\{1,\ldots,s_1,n-s_2+1,\dots,n\}$ by $W_p^{s_1,s_2}$. This is the affine Weyl group of a root system of type $A_{s_1+s_2-1}$. The group $W$ acts on $\mb Z^n$ by permutations, and $W_p\cong W\ltimes pX_0$, where $X_0=\{\lambda\in\mb Z^n\,|\,|\lambda|=0\}$ is the type $A_{n-1}$ root lattice and $|\lambda|=\sum_{i=1}^n\lambda_i$. Note that $W_p^{s_1,s_2}\cong W^{s_1,s_2}\ltimes pX_0^{s_1,s_2}$, where $X_0^{s_1,s_2}$ consists of the vectors in $X_0$ which are $0$ at the positions in $\{s_1+1,\ldots,n-s_2\}$, and $W^{s_1,s_2}=\Sym(\{1,\ldots,s_1,n-s_2+1,\dots,n\})$. We will work with $$\rho=(n,n-1,\ldots,1)\,.$$
It is easy to see that if $\lambda,\mu\in X$ are $W_p$-conjugate and equal at the positions in $\{s_1+1,\ldots,n-s_2\}$, then they are $W_p^{s_1,s_2}$-conjugate. The same applies for the dot action.

Now assume $G=\Sp_n$. We let $T$ be the group of diagonal matrices in $\Sp_n$, i.e. the matrices $diag(d_1,\ldots,d_n)$ with $d_id_{i+m}=1$ for all $i\in\{1,\ldots,m\}$. Now $X$ is naturally identified with $\mb Z^m$ such that the $i$-th diagonal coordinate function corresponds to the $i$-th standard basis element $\ve_i$ of $\mb Z^m$. We let $B^+$ be the Borel subgroup corresponding to the set of positive roots $\ve_i\pm\ve_j$, $1\le i<j\le m$, $2\ve_i$, $1\le i\le m$.
We can now identify the dominant weights with $m$-tuples $(\lambda_1,\ldots,\lambda_m)$ with $\lambda_1\ge\lambda_2\ge\cdots\ge\lambda_m\ge0$, or with partitions $\lambda$ with $l(\lambda)\le m$. We denote the subgroup of $W_p$ generated by the $s_{\alpha,l}$, $\alpha=\ve_i\pm\ve_j$, $1\le i<j\le s$ or $\alpha=2\ve_i$, $1\le i\le s$ by $W_p(C_s)$ and we denote the subgroup of $W_p$ generated by the $s_{\alpha,l}$, $\alpha=\ve_i\pm\ve_j$, $1\le i<j\le s$ by $W_p(D_s)$. The group $W$ acts on $\mb Z^m$ by permutations and sign changes, and $W_p\cong W\ltimes pX_{ev}$, where $X_{ev}=\{\lambda\in\mb Z^m\,|\,|\lambda|\text{ even}\}$ is the type $C_m$ root lattice. Note that $W_p(C_s)\cong W(C_s)\ltimes pX_{ev}(C_s)$ and $W_p(D_s)\cong W(D_s)\ltimes pX_{ev}(C_s)$, where $X_{ev}(C_s)$ consists of the vectors in $X_{ev}$ which are $0$ at the positions $>s$, $W(C_s)$ is generated by the $s_\alpha=s_{\alpha,0}$, $\alpha=\ve_i\pm\ve_j$, $1\le i<j\le s$ or $\alpha=2\ve_i$, $1\le i\le s$, and $W(D_s)$ is generated by the $s_\alpha$, $\alpha=\ve_i\pm\ve_j$, $1\le i<j\le s$. The group $W(D_s)$ acts by permutations and an even number of sign changes. We have $$\rho=(m,m-1,\ldots,1)\,.$$
It is easy to see that if $\lambda,\mu\in X$ are $W_p$-conjugate and equal at the positions $>s$, then they are $W_p(C_s)$-conjugate. The same applies for the dot action.

In Section~3 of \cite{LT} and \cite{T} the Jantzen sum formula is studied under certain assumptions and this leads to a \emph{reduced Jantzen sum formula}. From this a partial order $\preceq$ on $X^+$ is deduced which is the reflexive, transitive closure of the order ``$\chi(w\cdot\mu)$ occurs for some $w\in W$ in the RHS of the reduced Jantzen sum formula associated to $\lambda$".
We now give some precise definitions. First assume $G=\GL_n$. Then we define

\begin{defgl}
$\mu\preceq\lambda$ if and only if there is a sequence of dominant weights $\lambda=\chi_1,\ldots,\chi_t=\mu$, $t\ge1$, such that for all $r\in\{1,\ldots,t-1\}$, $\chi_{r+1}=ws_{\alpha,l}\cdot\chi_r$ for some $w\in W^{l(\chi_r^1),l(\chi_r^2)}$, $\alpha=\ve_i-\ve_j$, $1\le i\le l(\chi_r^1)$, $n-l(\chi_r^2)<j\le n$, and $l\ge1$ with $\la\chi_r+\rho,\alpha^\vee\ra-lp\ge1$, and $\chi(s_{\alpha,l}\cdot\chi_r)\ne0$.
\end{defgl}

\noindent We put $$\Lambda_p=\{\lambda\in X^+\,|\,\lambda^h_1+l(\lambda^h)\le p\text{\ for all\ }h\in\{1,2\}\}$$
We will assume $s=(s_1,s_2)$ where $s_1,s_2\in\{1,\ldots,\min(n,p)\}$ with $s_1+s_2\le n$ and we put
$$\Lambda(s)=\Lambda(s_1,s_2)=\{\lambda\in X^+\,|\,l(\lambda^h)\le s_h\le p-\lambda^h_1\text{\ for all\ }h\in\{1,2\}\}$$
and $H=W_p$.

Next assume $G=\Sp_n$, $n=2m$. Then we define

\begin{defgl}
$\mu\preceq\lambda$ if and only if there is a sequence of dominant weights $\lambda=\chi_1,\ldots,\chi_t=\mu$, $t\ge1$, such that for all $r\in\{1,\ldots,t-1\}$, $\chi_{r+1}=ws_{\alpha,l}\cdot\chi_r$ for some $w\in\Sym(\{1,\ldots,l(\chi_r)\})$, $\alpha=\ve_i+\ve_j$, $1\le i<j\le l(\chi_r)$, and $l\ge1$ with $\la\chi_r+\rho,\alpha^\vee\ra-lp\ge1$, and all entries of $s_{\alpha,l}(\chi_r+\rho)$ distinct and strictly positive.
\end{defgl}

\noindent We put
$$\Lambda_p=\{\lambda\in X^+\,|\,\lambda_1+l(\lambda)\le p\}$$

\noindent Now we will assume $s\in\{1,\ldots,\min(m,p)\}$ and we put
$$\Lambda(s)=\{\lambda\in X^+\,|\,l(\lambda)\le s\le p-\lambda_1\}$$
and $H=W_p(D_s)$.

We return to the general case $G=\GL_n$ or $G=\Sp_n$. For a subset $\Lambda$ of $X^+$ and a $G$-module $M$ we say that $M$ \emph{belongs to $\Lambda$} if all composition factors have highest weight in $\Lambda$ and we denote by $O_\Lambda(M)$ the largest submodule of $M$ which belongs to $\Lambda$. We denote the category of $G$-modules which belong to $\Lambda$ by $\mc C_\Lambda$. The category $\mc C_\Lambda$ is the module category of the algebra $O_\Lambda(k[G])^*$, see \cite[Ch A]{Jan} for the relevant definitions and explanation. Let $\Lambda\subseteq\Lambda(s)$ be $\preceq$-saturated. It was shown in Prop~3.1(ii) in \cite{T} and \cite{LT} that the algebra $O_\Lambda(k[G])^*$ is quasihereditary for the partial order $\preceq$ such that the irreducible, standard/costandard and tilting modules are the irreducible, Weyl/induced and tilting modules for $G$ with the same label.

\subsection{Arrow diagrams, and cap(-curl) diagrams and codiagrams}\label{ss.arrow_diagrams}\ \\
{\bf Arrow and cap(-curl) diagrams}. We now recall the definition of the arrow and cap(-curl) diagram from \cite[Sect~5]{LT} and \cite[Sect~5]{T} which is based on \cite{CdV} and \cite{Sh}.
Recall the definitions of $\preceq$, $\Lambda(s)$ and $H$ from Section~\ref{ss.reductive_groups}.
First we assume $G=\GL_n$. An arrow diagram has $p$ \emph{nodes} on a (horizontal) \emph{line} with $p$ \emph{labels}: $0,\ldots,p-1$. The $i$-th node from the left has label $i-1$.
Although $0$ and $p-1$ are not connected we consider them as neighbours and we will identify a diagram with any of its cyclic shifts. So when we are going to the left through the nodes we get $p-1$ after $0$ and when we are going to the right we get $0$ after $p-1$.
Next we choose $s_1,s_2\in\{1,\ldots,\min(n,p)\}$ with $s_1+s_2\le n$ and put a wall below the line between $\rho_{s_1}$ and $\rho_{s_1}-1$ mod $p$, and a wall above the line between $\rho_{s_2'}=s_2$ and $s_2+1$ mod $p$. Then we can also put in a top and bottom \emph{value} for each label. A value and its corresponding label are always equal mod $p$. Below the line we start with $\rho_{s_1}$ immediately to the right of the wall, and then increasing in steps of $1$ going to the right: $\rho_{s_1},\rho_{s_1}+1,\ldots,\rho_{s_1}+p-1$. Above the line we start with $\rho_{s_2'}=s_2$ immediately to the left of the wall, and then decreasing in steps of $1$ going to the left: $s_2,s_2-1,\ldots,s_2-p+1$.
For example, when $p=5$, $n=5$ and $s_1=s_2=1$, then $\rho_{s_1}=s_1'=5$, $\rho_{s_2'}=s_2=1$ and we have labels
$$
\resizebox{3.2cm}{.5cm}{\xy
(0,0)="a1"*{\bullet};(0,-4.5)*{0};
(8.5,0)="a2"*{\bullet};(8.5,-4.5)*{1};(12.8,3)*{\rule[0mm]{.3mm}{6mm}};
(17,0)="a3"*{\bullet};(17,-4.5)*{2};
(25.5,0)="a4"*{\bullet};(25.5,-4.5)*{3};
(34,0)="a5"*{\bullet};(34,-4.5)*{4};(38,-3)*{\rule[0mm]{.3mm}{6mm}};
{"a1";"a5"**@{-}}; 
\endxy}\ \ \,
$$
and values
$$
\resizebox{3.2cm}{.5cm}{\xy
(0,0)="a1"*{\bullet};(0,4.5)*{0};(0,-4.5)*{5};
(8.5,0)="a2"*{\bullet};(8.5,4.5)*{1};(8.5,-4.5)*{6};(12.8,3)*{\rule[0mm]{.3mm}{6mm}};
(17,0)="a3"*{\bullet};(17,4.5)*{-3};(17,-4.5)*{7};
(25.5,0)="a4"*{\bullet};(25.5,4.5)*{-2};(25.5,-4.5)*{8};
(34,0)="a5"*{\bullet};(34,4.5)*{-1};(34,-4.5)*{9};(38,-3)*{\rule[0mm]{.3mm}{6mm}};
{"a1";"a5"**@{-}}; 
\endxy}\ \ .
$$
For $\lambda=[\lambda^1,\lambda^2]\in\Lambda(s)=\Lambda(s_1,s_2)$ we now form the ($(s_1,s_2)$-)\emph{arrow diagram} by putting $s_1$ arrows below the line ($\land$) that point \emph{from} the values $(\rho+\lambda)_1,\ldots,$ $(\rho+\lambda)_{s_1}$, i.e. $\rho_1+\lambda^1_1,\ldots,\rho_{s_1}+\lambda^1_{s_1}$, or from the corresponding labels, and $s_2$ arrows above the line ($\vee$) that point from the values $(\rho+\lambda)_{1'},\ldots,(\rho+\lambda)_{s_2'}$, i.e. $1-\lambda^2_1,\ldots,s_2-\lambda^2_{s_2}$, or to the corresponding labels.
So in the above example the arrow diagram of $\lambda=[4,4]$ is
$$
\resizebox{3.2cm}{.5cm}{\xy
(0,0)="a1"*{\bullet};(0,-4.5)*{0};
(8.5,0)="a2"*{\bullet};(8.5,-4.5)*{1};(12.8,3)*{\rule[0mm]{.3mm}{6mm}};
(17,0)="a3"*{\bullet};(17,-4.5)*{2};(17,1.5)*{\vee};
(25.5,0)="a4"*{\bullet};(25.5,-4.5)*{3};
(34,0)="a5"*{\bullet};(34,-4.5)*{4};(34,-1.5)*{\land};(38,-3)*{\rule[0mm]{.3mm}{6mm}};
{"a1";"a5"**@{-}}; 
\endxy}\ \ .
$$
In such a diagram we frequently omit the nodes and/or the labels. When it has already been made clear what the labels are and where the walls are, we can simply represent the arrow diagram by a string of single arrows ($\land$, $\vee$), opposite pairs of arrows ($\times$) and symbols ${\rm o}$ to indicate the absence of an arrow. In the above example $\lambda=[4,4]$ is then represented by ${\rm oo}\nts\vee\nts{\rm o}\land$ and $\lambda=[2,4]$ is represented by ${\rm oo}\nts\times\nts{\rm oo}$.

We can form the arrow diagram of $\lambda$ as follows. First line up $s_1$ arrows immediately to the right of the wall below the line and then move them to the right to the correct positions. The arrow furthest from the wall corresponds to $\lambda^1_1$, and the arrow closest to the wall corresponds to $\lambda^1_{s_1}$. Then line up $s_2$ arrows immediately to the left of the wall above the line and then move them to the left to the correct positions. The arrow furthest from the wall corresponds to $\lambda^2_1$, and the arrow closest to the wall corresponds to $\lambda^2_{s_2}$.
The part of $\lambda^1$ corresponding to an arrow below the line equals the number of nodes without a $\land$ from that arrow to the wall going to the left and the part of $\lambda^2$ corresponding to an arrow below the line equals the number of nodes without a $\vee$ from that arrow to the wall going to the right.

When we speak of ``arrow pairs", also in the $\Sp_n$-case below, it is understood that both arrows are single, i.e. neither of the two arrows is part of an $\times$. The arrows need not be consecutive in the diagram.
We now define the \emph{cap diagram} $c_\lambda$ of the arrow diagram associated to $\lambda$ as follows. We assume that the arrow diagram is cyclically shifted such that at least one of the walls is between the first and last node. We select one such wall and when we speak of ``the wall" it will be the other wall. All caps are anti-clockwise, starting from the rightmost node. We start on the left side of the wall. We form the caps recursively. Find an arrow pair $\vee\land$ that are neighbours in the sense that the only arrows in between are already connected with a cap or are part of an $\times$, and connect them with a cap. Repeat this until there are no more such arrow pairs. Now the unconnected arrows that are not part of an $\times$ form a sequence $\land\cdots\land\vee\cdots\vee$. Note that none of these arrows occur inside a cap. The caps on the right side of the wall are formed in the same way.
For example, when $p=17$, $n=20$, $s_1=8$, $s_2=7$ and $\lambda=[9654^22,8^243^22]$, then
$c_\lambda$ is
$$\xy
(0,0)="a1";(0,-1)*{\land};(0,-4)*{13};
(5,0)="a2";(5,-1)*{\land};
(10,0)="a3";
(15,0)="a4";(15,1)*{\vee};(15,-4)*{16};
(20,0)="a5";(20,-1)*{\land};(20,-4)*{0};
(25,0)="a6";(25,1)*{\vee};
(30,0)="a7";(30,1)*{\vee};
(35,0)="a8";(35,-1)*{\land};
(40,0)="a9";(40,-1)*{\land};(40,1)*{\vee};
(45,0)="a10";
(50,0)="a11";(50,-1)*{\land};
(55,0)="a12";(55,1)*{\vee};(57.5,3)*{\rule[0mm]{.3mm}{6mm}};
(60,0)="a13";(60,-1)*{\land};
(65,0)="a14";
(70,0)="a15";(70,1)*{\vee};
(75,0)="a16";(75,1)*{\vee};
(80,0)="a17";(80,-1)*{\land};(80,-4)*{12};(82.5,-3)*{\rule[0mm]{.3mm}{6mm}};
{"a1";"a17"**@{-}}; 
"a5";"a4"**\crv{(20,6)&(15,6)};
"a8";"a7"**\crv{(35,6)&(30,6)};
"a11";"a6"**\crv{(50,8)&(25,11)};
"a17";"a16"**\crv{(80,6)&(75,6)};
\endxy\ .$$
Note that the nodes with labels $5,9,15$ have no arrow.

Now assume $G=\Sp_n$. An arrow diagram has $(p+1)/2$ \emph{nodes} on a (horizontal) \emph{line} with $p$ \emph{labels}: $0$ and $\pm i$, $i\in\{1,\ldots,(p-1)/2\}$. The $i$-th node from the left has top label $-(i-1)$ and a bottom label $i-1$. So the first node is the only node whose top and bottom label are the same. Next we choose $s\in\{1,\ldots,\min(m,p)\}$ and put a wall between $\rho_s$ and $\rho_s-1$ mod $p$. So when $\rho_s=(p+1)/2$ mod $p$ we can put the wall above or below the line, otherwise there is only one possibility.
Then we can also put in the \emph{values}, one for each label. A value and its corresponding label are always equal mod $p$. We start with $\rho_s$ immediately after the wall in the anti-clockwise direction, and then increasing in steps of $1$ going in the anti-clockwise direction around the line: $\rho_s,\rho_s+1,\ldots,\rho_s+p-1$.
For example, when $p=5$, $m=7$ and $s=2$, then $\rho_s=6$ and we have labels
$$
\resizebox{1.6cm}{.5cm}{\xy
(0,0)="a1"*{\bullet};(0,4.5)*{0};(0,-4.5)*{0};(3.8,-3)*{\rule[0mm]{.3mm}{6mm}};
(7.5,0)="a2"*{\bullet};(7.5,4.5)*{-1};(7.5,-4.5)*{1};
(15,0)="a3"*{\bullet};(15,4.5)*{-2};(15,-4.5)*{2};
{"a1";"a3"**@{-}}; 
\endxy}
$$
(usually we omit the top labels), and values
$$
\resizebox{1.6cm}{.5cm}{\xy
(0,0)="a1"*{\bullet};(0,4.5)*{10};(0,-4.5)*{10};(3.8,-3)*{\rule[0mm]{.3mm}{6mm}};
(7.5,0)="a2"*{\bullet};(7.5,4.5)*{9};(7.5,-4.5)*{6};
(15,0)="a3"*{\bullet};(15,4.5)*{8};(15,-4.5)*{7};
{"a1";"a3"**@{-}}; 
\endxy}\ .
$$
For a partition $\lambda\in\Lambda(s)$ we now form the ($s$-)\emph{arrow diagram} by putting in $s$ arrows ($\vee$ or $\land$) that point \emph{from} the values $(\rho+\lambda)_1,\ldots,(\rho+\lambda)_s$, or the corresponding labels. In case of the label $0$ we have two choices for the arrow. So in the above example the arrow diagram of $\lambda=(1^2)$ is
$$
\resizebox{1.6cm}{.5cm}{\xy
(0,0)="a1"*{\bullet};(0,4.5)*{0};(0,-4.5)*{0};(3.8,-3)*{\rule[0mm]{.3mm}{6mm}};
(7.5,0)="a2"*{\bullet};(7.5,4.5)*{-1};(7.5,-4.5)*{1};
(15,0)="a3"*{\bullet};(15,1.5)*{\vee};(15,-1.5)*{\land};(15,4.5)*{-2};(15,-4.5)*{2};
{"a1";"a3"**@{-}}; 
\endxy}\ .
$$
As in the $\GL_n$-case we can simply represent the arrow diagram by a string of single arrows ($\land$, $\vee$), opposite pairs of arrows ($\times$) and symbols ${\rm o}$ to indicate the absence of an arrow. In the above example $\lambda=(1^2)$ is then represented by ${\rm oo}\times$ and $\lambda=(32)$ is represented by $\vee{\rm o}\vee$ or $\land{\rm o}\vee$.

We can form the arrow diagram of $\lambda$ by first lining all $s$ arrows up against the wall and then moving them in the anticlockwise direction to the right positions. The arrow furthest from the wall (in the anti-clockwise direction) corresponds to $\lambda_1$, and the arrow closest to the wall corresponds to $\lambda_s$. The part corresponding to an arrow equals the number of labels without an arrow from that arrow to the wall in the clockwise direction.

We now define the \emph{cap-curl diagram} $c_\lambda$ of the arrow diagram associated to $\lambda$ as follows. All caps and curls are anti-clockwise, starting from the arrow closest to the wall. We start on the left side of the wall. We first form the caps recursively. Find an arrow pair $\vee\land$ that are neighbours in the sense that the only arrows in between are already connected with a cap or are part of an $\times$, and connect them with a cap. Repeat this until there are no more such arrow pairs.
Now the unconnected arrows that are not part of an $\times$ form a sequence $\land\cdots\land\vee\cdots\vee$. We connect consecutive (in the mentioned sequence) $\land\land$ pairs with a curl, starting from the left. At the end the unconnected arrows that are not part of an $\times$ form a sequence $\land\vee\cdots\vee$ or just a sequence of $\vee$'s. Note that none of these arrows occur inside a cap or curl.
The caps on the right side of the wall are formed in the same way. The curls now connect consecutive $\vee\vee$ pairs and are formed starting from the right. So at the end the unconnected arrows that are not part of an $\times$ form a sequence $\land\cdots\land\vee$ or just a sequence of $\land$'s. Again, none of these arrows occur inside a cap or curl.
For example, when $p=23$, $m=17$, $s=12$ and $\lambda=(11,11,11,11,11,11,10,6,4,4,1)$, then
$c_\lambda$ is
$$\xy
(0,0)="a1";(0,1)*{\vee};
(5,0)="a2";(5,-1)*{\land};
(10,0)="a3";(10,1)*{\vee};(10,-1)*{\land};
(15,0)="a4";(15,-1)*{\land};
(20,0)="a5";(20,-1)*{\land};
(25,0)="a6";(25,-1)*{\land};(27.5,-3)*{\rule[0mm]{.3mm}{6mm}};
(30,0)="a7";(30,-1)*{\land};
(35,0)="a8";(35,1)*{\vee};
(40,0)="a9";(40,-1)*{\land};
(45,0)="a10";
(50,0)="a11";(50,1)*{\vee};
(55,0)="a12";(55,1)*{\vee};
{"a1";"a12"**@{-}}; 
"a2";"a1"**\crv{(5,6)&(0,6)}; 
"a9";"a8"**\crv{(40,-6)&(35,-6)}; 
"a5";"a4"**\crv{(20,8)&(-5,9)&(-5,-5)&(15,-7)}; 
"a11";"a12"**\crv{(50,-3)&(59,-3)&(59,4)&(55,4)}; 
\endxy\ .$$
Note that the $10$-th node which has labels $\pm9$ and values $9$ and $14$, has no arrow.

\medskip

{\bf $(\land,\vee)$-sequences and length functions}. We now return to the general case $G=\GL_n$ or $G=\Sp_n$. First we introduce some combinatorial tools to express the order $\preceq$ in terms of arrow diagrams. This is based on the treatment in \cite[Sect 5]{BS} and \cite[Sect 8]{CdV}. Let $\xi,\eta$ be sequences with values in $\{\land,\vee\}$. We say that $\xi$ and $\eta$ are \emph{conjugate} if they have the same length and the same number of $\land$'s$\mod 2$. We say they are \emph{strongly conjugate} if they have the same length and the same number of $\land$'s.

\begin{defgl}
We write $\xi\preceq\eta$ if $\xi$ can be obtained from $\eta$ by repeatedly replacing an arrow pair $\vee\land$ or an arrow pair $\land\land$ by the opposite arrow pair.
\end{defgl}

Clearly, $\xi\preceq\eta$ implies that $\xi$ and $\eta$ are conjugate. If $\xi$ and $\eta$ are strongly conjugate and $\xi\preceq\eta$, then $\xi$ can be obtained from $\eta$ by repeatedly replacing an arrow pair $\vee\land$ by the opposite arrow pair. For $\eta,\xi\in\{\land,\vee\}^r$ and $i\in\{1,\ldots,r\}$ we define
$$l_i(\eta)=|\{j\in\{i,\ldots,r\}\,|\,\eta_j=\land\}|\text{\ and\ }l_i(\eta,\xi)=l_i(\eta)-l_i(\xi)\,.$$
\noindent Note that $l_i(\eta,\xi)$ equals
$$|\{j\in\{i,\ldots,r\}\,|\,\eta_j\ne\xi_j\text{\ and }\eta_j=\land\}|-|\{j\in\{i,\ldots,r\}\,|\,\eta_j\ne\xi_j\text{\ and }\,\xi_j=\land\}|\,.$$
Then we have

\medskip
\emph{$\xi\preceq\eta\iff\xi$ and $\eta$ are conjugate and $l_i(\eta,\xi)\ge0$ for all $i\in\{2,\ldots,r\}$.}
\medskip

\noindent Put $l(\eta)=\sum_{i=2}^rl_i(\eta)$ and $l(\eta,\xi)=l(\eta)-l(\xi)=\sum_{i=2}^rl_i(\eta,\xi)$. Call replacing an arrow pair $\land\land$ in the first two positions or a consecutive arrow pair $\vee\land$ by the opposite arrow pair an \emph{elementary operation}.
If $\xi\preceq\eta$, then $l(\eta,\xi)$ is the minimal number of elementary operations needed to obtain $\xi$ from $\eta$.\\

For $\lambda\in\Lambda(s)$ we define the \emph{associated pair of $(\land,\vee)$-sequences} $(\eta^1,\eta^2)$ as follows. If $G=\GL_n$, then $\eta^1$ is the sequence of single arrows to the left of the wall in the (cyclically shifted) arrow diagram of $\lambda$, and $\eta^2$ is the sequence of single arrows to the right of the wall. This pair is well-defined up to order. If $G=\Sp_n$, then $\eta^1$ is the sequence of single arrows to the left of the wall in the arrow diagram of $\lambda$, and $\eta^2$ is the sequence of single arrows to the right of the wall, rotated 180 degrees. For example, when $G=\GL_n$ and the arrow diagram of $\lambda$ is
$$\xy
(0,0)="a1";(0,-1)*{\land};
(5,0)="a2";(5,-1)*{\land};
(10,0)="a3";
(15,0)="a4";(15,1)*{\vee};
(20,0)="a5";(20,-1)*{\land};
(25,0)="a6";(25,1)*{\vee};
(30,0)="a7";(30,1)*{\vee};
(35,0)="a8";(35,-1)*{\land};
(40,0)="a9";(40,-1)*{\land};(40,1)*{\vee};
(45,0)="a10";
(50,0)="a11";(50,-1)*{\land};
(55,0)="a12";(55,1)*{\vee};(57.5,3)*{\rule[0mm]{.3mm}{6mm}};
(60,0)="a13";(60,-1)*{\land};
(65,0)="a14";
(70,0)="a15";(70,1)*{\vee};
(75,0)="a16";(75,1)*{\vee};
(80,0)="a17";(80,-1)*{\land};(82.5,-3)*{\rule[0mm]{.3mm}{6mm}};
{"a1";"a17"**@{-}}; 
\endxy\ ,$$
then $(\eta^1,\eta^2)=(\land\land\vee\land\vee\vee\land\land\vee,\land\vee\vee\land)$, and
when $G=\Sp_n$ and the arrow diagram of $\lambda$ is
$$\xy
(0,0)="a1";(0,1)*{\vee};
(5,0)="a2";(5,-1)*{\land};
(10,0)="a3";(10,1)*{\vee};(10,-1)*{\land};
(15,0)="a4";(15,-1)*{\land};
(20,0)="a5";(20,-1)*{\land};
(25,0)="a6";(25,-1)*{\land};(27.5,-3)*{\rule[0mm]{.3mm}{6mm}};
(30,0)="a7";(30,-1)*{\land};
(35,0)="a8";(35,1)*{\vee};
(40,0)="a9";(40,-1)*{\land};
(45,0)="a10";
(50,0)="a11";(50,1)*{\vee};
(55,0)="a12";(55,1)*{\vee};
{"a1";"a12"**@{-}}; 
\endxy\ ,$$
then $(\eta^1,\eta^2)=(\vee\land\land\land\land,\land\land\vee\land\vee)$.
For $\lambda,\mu\in\Lambda(s)$ with associated pairs of $(\land,\vee)$-sequences $(\eta^1,\eta^2)$ and $(\xi^1,\xi^2)$ we put
$$n(\lambda)=l(\eta^1)+l(\eta^2)\text{\ and\ }n(\lambda,\mu)=l(\eta^1,\xi^1)+l(\eta^2,\xi^2)=n(\lambda)-n(\mu)\,.$$
Note that $n(\lambda,\mu)$ is independent of $s$.

If below $\lambda,\mu\in\Lambda(s)$, then we let $(\eta^1,\eta^2)$ and $(\xi^1,\xi^2)$ be the pairs of $(\land,\vee)$-sequences associated to $\lambda$ and $\mu$. If furthermore $G=\Sp_n$ and the arrow diagram of $\lambda$ has an arrow at $0$, then we assume that the parity of the number of $\land$'s in the arrow diagram of $\mu$ is the same as that for $\lambda$. This only requires a possible change of an arrow at $0$ to its opposite in the arrow diagram of $\lambda$. For $\lambda,\mu\in\Lambda(s)$ we have by \cite[Rem 5.1.1]{T} and \cite[Rem 5.1.1]{LT} that

\medskip
{\it \noindent $\lambda$ and $\mu$ are $H$-conjugate under the dot action if and only if
the arrow diagram of $\mu$ has its single arrows and its $\times$'s at the same nodes as the arrow diagram of $\lambda$ and
$\begin{cases}
\xi^i\text{\ and\ }\eta^i\text{\ are strongly conjugate for all\ } i\in\{1,2\}&\text{if\ } G=\GL_n,\\
\xi^i\text{\ and\ }\eta^i\text{\ are conjugate for all\ } i\in\{1,2\}&\text{if\ } G=\Sp_n.
\end{cases}$}
\medskip

\noindent Furthermore, for $\lambda\in\Lambda(s)$ and $\mu\in X^+$ we have 

\medskip
\centerline{\it $\mu\preceq\lambda\Leftrightarrow \mu\in\Lambda(s)\cap H\cdot\lambda\text{\ and\ \,} \xi^1\preceq\eta^1\text{\ and\ \,}\xi^2\preceq\eta^2$.}
\medskip

{\bf More cap(-curl) diagrams, and codiagrams}. We now recall from \cite[Sect~6,7]{LT} and \cite[Sect~6,7]{T} the definitions of cap(-curl) diagrams associated to two weights, and codiagrams. Let $\lambda,\mu\in\Lambda(s)$ with $\mu\preceq\lambda$. Then the arrow diagram of $\mu$ has its single arrows and its $\times$'s at the same nodes as the arrow diagram of $\lambda$. If $G=\Sp_n$ and the arrow diagram of $\lambda$ has an arrow at $0$, then we assume that the parity of the number of $\land$'s in the arrow diagram of $\mu$ is the same as that for $\lambda$. This only requires a possible change of an arrow at $0$ to its opposite in the arrow diagram of $\lambda$ (or $\mu$). If there is no arrow at $0$, then these parities will automatically be the same, since $\mu$ is $W_p(D_{l(\lambda)})$-conjugate to $\lambda$ under the dot action. The \emph{cap-curl diagram} $c_{\lambda\mu}$ associated to $\lambda$ \emph{and $\mu$} by replacing each arrow in $c_\lambda$ by the arrow from the arrow diagram of $\mu$ at the same node. Put differently, we put the caps and curls from $c_\lambda$ on top of the arrow diagram of $\mu$. We say that $c_{\lambda\mu}$ is \emph{oriented} if all caps and curls in $c_{\lambda\mu}$ are oriented (clockwise or anti-clockwise). It is not hard to show that when $c_{\lambda\mu}$ is oriented, the arrow diagrams of $\lambda$ and $\mu$ are the same at the nodes which are not endpoints of a cap or a curl in $c_\lambda$.

For example, when $G=\GL_n$, $p=5$, $n=7$, $s_1=2$, $s_2=3$ and $\lambda=[32,21^2]$. Then $\rho_{s_1}=s_1'=6$, and $c_\lambda$ (cyclically shifted) is
$$\xy
(0,0)="a1";(0,1)*{\vee};(0,-4)*{1};
(5,0)="a2";(5,1)*{\vee};(5,-4)*{2};
(10,0)="a3";(10,-1)*{\land};(10,-4)*{3};(12.5,3)*{\rule[0mm]{.3mm}{6mm}};
(15,0)="a4";(15,1)*{\vee};(15,-4)*{4};
(20,0)="a5";(20,-1)*{\land};(20,-4)*{0};(22.5,-3)*{\rule[0mm]{.3mm}{6mm}};
{"a1";"a5"**@{-}}; 
"a3";"a2"**\crv{(10,6)&(5,6)}; 
"a5";"a4"**\crv{(20,6)&(15,6)} 
\endxy\ .$$
The $\mu\in X^+$ with $\mu\prec\lambda$ are $[2^2,1^3],[31,21],[21,1^2],[3,2],[2,1]$,
with (cyclically shifted) arrow diagrams
$$\vee\vee\land\land\vee,\ \vee\land\vee\vee\land,\ \vee\land\vee\land\vee,\ \land\vee\vee\vee\land,\ \land\vee\vee\land\vee\,.$$
Only for the first three $c_{\lambda\mu}$ is oriented.

When $G=\Sp_n$, $p=11$, $m=7$, $s=5$ and $\lambda=(6^332)$. Then $\rho_s=3$ and $c_\lambda$ is
$$\xy
(0,0)="a1";(0,1)*{\vee};
(5,0)="a2";(5,-1)*{\land};
(10,0)="a3";(10,-1)*{\land};(12.5,-3)*{\rule[0mm]{.3mm}{6mm}};
(15,0)="a4";
(20,0)="a5";(20,1)*{\vee};
(25,0)="a6";(25,-1)*{\land};
{"a1";"a6"**@{-}}; 
"a2";"a1"**\crv{(5,6)&(0,6)}; 
"a6";"a5"**\crv{(25,-6)&(20,-6)} 
\endxy\ .$$
The $\mu\in X^+$ with $\mu\prec\lambda$ are $(6^321), (65^232), (65^221), (5^2432), (5^2421), (4^332),$\\ $(4^321)$,
with arrow diagrams
\begin{align*}
&\vee\land\land\,{\rm o}\land\vee,\ \land\vee\land\,{\rm o}\vee\land,\ \land\vee\land\,{\rm o}\land\vee,\ \land\land\vee\,{\rm o}\vee\land,\\
&\land\land\vee\,{\rm o}\land\vee,\ \vee\vee\vee\,{\rm o}\vee\land,\ \vee\vee\vee\,{\rm o}\land\vee\,.
\end{align*}
Only for the first three $c_{\lambda\mu}$ is oriented.

Finally, we define \emph{cap} or \emph{cap-curl codiagram} $co_\mu$ of the arrow diagram associated to $\mu\in\Lambda(s)$ by reversing the roles of $\land$ and $\vee$ in the definition of $c_\lambda$. So all caps and curls in $co_\mu$ are clockwise.
In the case $G=\Sp_n$ the caps now have their curve below the line when they are to the left of the wall and above the line when they are to the right of the wall. If $\mu,\lambda\in\Lambda(s)$ with $\mu\preceq\lambda$, then  we define cap or cap-curl codiagram $co_{\mu\lambda}$ associated to $\mu$ \emph{and $\lambda$} by replacing each arrow in $co_\mu$ by the arrow from the arrow diagram of $\lambda$ at the same node.\footnote{Again we assume that if $G=\Sp_n$ and the arrow diagram of $\lambda$ has an arrow at $0$, then the parity of the number of $\land$'s in the arrow diagram of $\lambda$ is the same as that for $\mu$.} We say that $co_{\mu\lambda}$ is \emph{oriented} if all caps and curls in $co_{\mu\lambda}$ are oriented (clockwise or anti-clockwise). We refer to \cite[Sect~7]{T} and \cite[Sect~7]{LT} for more details and just give two examples from these papers.

When $G=\Sp_n$, $p=11$, $m=7$, $s=5$ and $\mu=(4^321)$. Then $\rho_s=3$ and $co_\mu$ is
$$\xy
(0,0)="a1";(0,1)*{\vee};
(5,0)="a2";(5,1)*{\vee};
(10,0)="a3";(10,1)*{\vee};(12.5,-3)*{\rule[0mm]{.3mm}{6mm}};
(15,0)="a4";
(20,0)="a5";(20,-1)*{\land};
(25,0)="a6";(25,1)*{\vee};
{"a1";"a6"**@{-}}; 
"a2";"a1"**\crv{(5,-4)&(-4,-5)&(-4,5)&(0,5)}; 
"a6";"a5"**\crv{(25,6)&(20,6)} 
\endxy\ .$$
Consider two dominant weights $\lambda$ with $\mu\preceq\lambda$: $(6^332)$ and $(5^2432)$
with arrow diagrams $\vee\land\land{\rm o}\vee\land$ and $\land\land\vee{\rm o}\vee\land$.
Only for the last $co_{\mu\lambda}$ is oriented.

When $G=\GL_n$, $p=5$, $n=7$, $s_1=2$, $s_2=3$ and $\mu=[2,1]$. Then $\rho_{s_1}=s_1'=6$, and $co_\mu$ (cyclically shifted) is
$$\xy
(0,0)="a1";(0,-1)*{\land};(0,-4)*{1};
(5,0)="a2";(5,1)*{\vee};(5,-4)*{2};
(10,0)="a3";(10,1)*{\vee};(10,-4)*{3};(12.5,3)*{\rule[0mm]{.3mm}{6mm}};
(15,0)="a4";(15,-1)*{\land};(15,-4)*{4};
(20,0)="a5";(20,1)*{\vee};(20,-4)*{0};(22.5,-3)*{\rule[0mm]{.3mm}{6mm}};
{"a1";"a5"**@{-}}; 
"a2";"a1"**\crv{(5,6)&(0,6)}; 
"a5";"a4"**\crv{(20,6)&(15,6)} 
\endxy\ .$$
Consider two dominant weights $\lambda$ with $\mu\preceq\lambda$: $[31,21]$ and $[32,21^2]$
with (cyclically shifted) arrow diagrams $\vee\land\vee\vee\land$ and $\vee\vee\land\vee\land$.
Only for the first $co_{\mu\lambda}$ is oriented.

\subsection{The translation functors}\label{ss.translation}
We recall from \cite[Sect~4]{LT} and \cite[Sect~4]{T} the definition and basic properties of certain translation functors and in the case of $G=\Sp_n$ we will also introduce certain refined translation functors.
For simplicity we do not quite state things in the same generality as in \cite{LT} and \cite{T}: we work below with the set $\Lambda(s)$ rather than the set $\Lambda_s$ from \cite{LT} or the set $\Lambda_p$ as in \cite{T}.
For $\lambda\in X^+$ the projection functor ${\rm pr}_\lambda:\{G\text{-modules}\}\to\{G\text{-modules}\}$ is defined by ${\rm pr}_\lambda M=O_{W_p\cdot\lambda\cap X^+}(M)$. Then $M=\bigoplus_\lambda{\rm pr}_\lambda M$ where the sum is over a set of representatives of the $W_p$-linkage classes in $X^+$, see \cite[II.7.3]{Jan}. Recall the definitions of $\preceq$, $\Lambda(s)$ and $H$ from Section~\ref{ss.reductive_groups}.

First assume $G=\GL_n$. For $\lambda=[\lambda^1,\lambda^2]\in X^+$, let ${\rm Supp}_1(\lambda)$ be the set of all $\mu=[\mu^1,\mu^2]\in X^+$ which can be obtained by adding a box to $\lambda^1$ or removing a box from $\lambda^2$, but not both, and let ${\rm Supp}_2(\lambda)$ be the set of all $\mu=[\mu^1,\mu^2]\in X^+$ which can be obtained by removing a box from $\lambda^1$ or adding a box to $\lambda^2$, but not both. Now let $\lambda,\lambda'\in X^+$ with $\lambda'\in{\rm Supp}_h(\lambda)$, $h\in\{1,2\}$. Then we have for the \emph{translation functor}, see \cite[II.7.6]{Jan}, $T_\lambda^{\lambda'}:\{G\text{-modules}\}\to\{G\text{-modules}\}$ that $T_\lambda^{\lambda'}M={\rm pr}_{\lambda'}(({\rm pr}_\lambda M)\ot V)$ when $h=1$ and $T_\lambda^{\lambda'}M={\rm pr}_{\lambda'}(({\rm pr}_\lambda M)\ot V^*)$ when $h=2$.
Furthermore, $T_\lambda^{\lambda'}$ is exact and left and right adjoint to $T_{\lambda'}^\lambda$.
An application of Brauer's formula shows that, for $\lambda'\in{\rm Supp}_h(\lambda)$, $h\in\{1,2\}$, and $\mu\in X^+\cap  W_p\cdot\lambda$, $T_\lambda^{\lambda'}\nabla(\mu)$ has a good filtration with sections $\nabla(\nu)$, $\nu\in{\rm Supp}_h(\mu)\cap  W_p\cdot\lambda'$, and the analogue for Weyl modules and Weyl filtrations also holds. We refer to \cite[Sect~4]{T} for further explanation.
To unify notation with the case $G=\Sp_n$, which we will discuss next, we put $\widetilde T_\lambda^{\lambda'}=T_\lambda^{\lambda'}$.

Now assume $G=\Sp_n$. For $\lambda\in X^+$, let ${\rm Supp}(\lambda)$ be the set of all partitions of length $\le m$ which can be obtained by adding a box to $\lambda$ or removing a box from $\lambda$. Then we have for the \emph{translation functor} $T_\lambda^{\lambda'}:\{G\text{-modules}\}\to\{G\text{-modules}\}$ that $T_\lambda^{\lambda'}M={\rm pr}_{\lambda'}(({\rm pr}_\lambda M)\ot V)$. Furthermore, $T_\lambda^{\lambda'}$ is exact and left and right adjoint to $T_{\lambda'}^\lambda$.
Note that, for $\mu\in X^+\cap  W_p\cdot\lambda$, $T_\lambda^{\lambda'}\nabla(\mu)$ has a good filtration with sections $\nabla(\nu)$, $\nu\in{\rm Supp}(\mu)\cap  W_p\cdot\lambda'$, and the analogue for Weyl modules and Weyl filtrations also holds.

We now define certain refined translation functors.
If $\Lambda\subseteq\Lambda(s)$ is a $\preceq$-saturated set, then, by \cite[Prop~3.1(ii)]{LT}, the type $D_s$ linkage principle holds in $\mc C_\Lambda$. So if $\lambda,\mu\in\Lambda$ belong to the same $\mc C_\Lambda$-block, then they are conjugate under the dot action of $W_p(D_s)$.
For $\lambda\in\Lambda(s)$ we define the projection functor $\widetilde{\rm pr}_\lambda:\mc C_{\Lambda(s)}\to\mc C_{\Lambda(s)}$ by $\widetilde{\rm pr}_\lambda M=O_{W_p(D_s)\cdot\lambda\cap X^+}(M)$. Then $M=\bigoplus_\lambda\widetilde{\rm pr}_\lambda M$ where the sum is over a set of representatives of the type $D_s$ linkage classes in $\Lambda(s)$. Note that $\widetilde{\rm pr}_\lambda M$ is a direct summand of ${\rm pr}_\lambda M$.
Now let $\lambda,\lambda'\in\Lambda(s)$ with $\lambda'\in{\rm Supp}(\lambda)$ and let $\mc C,\mc C'$ be Serre subcategories of $\mc C_{\Lambda(s)}$ such that ${\rm pr}_{\lambda'}((\widetilde{\rm pr}_\lambda M)\ot V)\in\mc C_{\Lambda(s)}$ for all $M\in\mc C$ and ${\rm pr}_\lambda((\widetilde{\rm pr}_{\lambda'}M)\ot V)\in\mc C_{\Lambda(s)}$ for all $M\in\mc C'$. Then we define the \emph{translation functors} $\widetilde T_\lambda^{\lambda'}:\mc C\to\mc C_{\Lambda(s)}$ and $\widetilde T_{\lambda'}^\lambda:\mc C'\to\mc C_{\Lambda(s)}$ by $\widetilde T_\lambda^{\lambda'}M=\widetilde{\rm pr}_{\lambda'}((\widetilde{\rm pr}_\lambda M)\ot V)$ and $\widetilde T_{\lambda'}^\lambda M=\widetilde{\rm pr}_\lambda((\widetilde{\rm pr}_{\lambda'} M)\ot V)$. Note that if $\mu\in X^+\cap  W_p(D_s)\cdot\lambda$ and $\nabla(\mu)\in\mc C$, then $\widetilde T_\lambda^{\lambda'}\nabla(\mu)$ has a good filtration with sections $\nabla(\nu)$, $\nu\in{\rm Supp}(\mu)\cap  W_p(D_s)\cdot\lambda'$. The analogue for Weyl modules and Weyl filtrations also holds.
If $\widetilde T_\lambda^{\lambda'}$ and $\widetilde T_{\lambda'}^\lambda$ have image in $\mc C$ and $\mc C'$, then they restrict to functors $\mc C\to\mc C'$ and $\mc C'\to \mc C$ which are exact and each others left and right adjoint.
To unify notation with the case $G=\GL_n$ we put ${\rm Supp}_h={\rm Supp}$ for $h\in\{1,2\}$.

We now return to the general case $G=\GL_n$ or $G=\Sp_n$. Proposition~\ref{prop.trans_equivalence} below is a combination of Propositions~4.1 in \cite{LT} and \cite{T} and Proposition~\ref{prop.trans_projection} below is a combination of Propositions~4.2 in \cite{LT} and \cite{T}. In the case $G=\Sp_n$ we can ignore the subscripts $h$ and $\ov h$, and in the $G=\GL_n$-case we can read $\widetilde T$ as $T$.

\begin{propgl}[Translation equivalence]\label{prop.trans_equivalence}
Let $h,\ov h\in\{1,2\}$ be distinct, let $\lambda,\lambda'\in\Lambda(s)$ with $\lambda'\in{\rm Supp}_h(\lambda)$ and let $\Lambda\subseteq H\cdot\lambda\cap\Lambda(s),\Lambda'\subseteq H\cdot\lambda'\cap\Lambda(s)$ be $\preceq$-saturated sets. 
Assume
\begin{enumerate}[{\rm (1)}]
\item ${\rm Supp}_h(\nu)\cap W_p\cdot\lambda'\subseteq\Lambda(s)$ for all $\nu\in\Lambda$, and ${\rm Supp}_{\ov h}(\nu')\cap  W_p\cdot\lambda\subseteq\Lambda(s)$ for all $\nu'\in\Lambda'$.
\item $|{\rm Supp}_h(\nu)\cap H\cdot\lambda'|=1=|{\rm Supp}_{\ov h}(\nu')\cap H\cdot\lambda|$ for all $\nu\in\Lambda$ and $\nu'\in\Lambda'$.
\item The map $\nu\mapsto\nu':\Lambda\to\Lambda(s)$ given by ${\rm Supp}_h(\nu)\cap H\cdot\lambda'=\{\nu'\}$ has image $\Lambda'$, and together with its inverse $\Lambda'\to\Lambda$ it preserves the order $\preceq$.
\end{enumerate}
Then $\widetilde T_\lambda^{\lambda'}$ restricts to an equivalence of categories $\mc C_\Lambda\to\mc C_{\Lambda'}$ with inverse $\widetilde T_{\lambda'}^\lambda:\mc C_{\Lambda'}\to\mc C_\Lambda$.
Furthermore, with $\nu$ and $\nu'$ as in (3), we have $\widetilde T_\lambda^{\lambda'}\nabla(\nu)=\nabla(\nu')$, $\widetilde T_\lambda^{\lambda'}\Delta(\nu)=\Delta(\nu')$, $\widetilde T_\lambda^{\lambda'}L(\nu)=L(\nu')$, $\widetilde T_\lambda^{\lambda'}T(\nu)=T(\nu')$ and $\widetilde T_\lambda^{\lambda'}I_\Lambda(\nu)=I_{\Lambda'}(\nu')$.
\end{propgl}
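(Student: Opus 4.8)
The plan is to show that, under the stated hypotheses, $\widetilde T_\lambda^{\lambda'}$ and $\widetilde T_{\lambda'}^\lambda$ restrict to exact, mutually left‑and‑right adjoint functors $\mc C_\Lambda\rightleftarrows\mc C_{\Lambda'}$ interchanging (co)standard objects according to the bijection $\nu\leftrightarrow\nu'$ of (3), and then to deduce that they are mutually inverse equivalences by a standard adjunction argument. First I would check that the functors are defined on these categories. In the $\GL_n$-case $T_\lambda^{\lambda'}$ is defined on all $G$-modules and nothing is needed; in the $\Sp_n$-case one must verify the Serre‑subcategory condition from the definition of the refined translation functors, i.e. that ${\rm pr}_{\lambda'}((\widetilde{\rm pr}_\lambda M)\ot V)\in\mc C_{\Lambda(s)}$ for $M\in\mc C_\Lambda$ and the symmetric statement for $\mc C_{\Lambda'}$. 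This is exactly what (1) provides: the composition factors of such an $M$ have highest weights in $\Lambda\subseteq H\cdot\lambda\cap\Lambda(s)$, and after tensoring the associated $\nabla(\nu)$ with $V$ and projecting one only sees $\nabla(\sigma)$ with $\sigma\in{\rm Supp}_h(\nu)\cap W_p\cdot\lambda'\subseteq\Lambda(s)$. With the functors defined on $\mc C_\Lambda,\mc C_{\Lambda'}$, the good/Weyl‑filtration statements recalled in Section~\ref{ss.translation} give, for $\nu\in\Lambda$, that $\widetilde T_\lambda^{\lambda'}\nabla(\nu)$ (resp. $\widetilde T_\lambda^{\lambda'}\Delta(\nu)$) has a $\nabla$- (resp. $\Delta$-) filtration with sections indexed by ${\rm Supp}_h(\nu)\cap H\cdot\lambda'$; by (2) this index set consists of the single weight $\nu'$ of (3), so $\widetilde T_\lambda^{\lambda'}\nabla(\nu)=\nabla(\nu')$ and $\widetilde T_\lambda^{\lambda'}\Delta(\nu)=\Delta(\nu')$.

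Next I would show the functors restrict between $\mc C_\Lambda$ and $\mc C_{\Lambda'}$. By (3) we have $\nu'\in\Lambda'$, and by Prop~3.1(ii) in \cite{T} and \cite{LT} applied to the $\preceq$-saturated set $\Lambda'$, the Weyl and induced modules $\Delta(\nu'),\nabla(\nu')$ are the (co)standard objects of the highest weight category $\mc C_{\Lambda'}$; in particular they lie in $\mc C_{\Lambda'}\subseteq\mc C_{\Lambda(s)}$, so the isomorphisms of the previous step make sense. Since $\widetilde T_\lambda^{\lambda'}$ is exact and every object of $\mc C_\Lambda$ has finite length with composition factors $L(\nu)$, $\nu\in\Lambda$, each of which is a quotient of $\Delta(\nu)$, the image $\widetilde T_\lambda^{\lambda'}L(\nu)$ is a quotient of $\Delta(\nu')$, hence lies in $\mc C_{\Lambda'}$; therefore $\widetilde T_\lambda^{\lambda'}$ maps $\mc C_\Lambda$ into $\mc C_{\Lambda'}$. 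The bijection of (3) is injective (two weights of $\Lambda$ with the same image $\nu'$ both lie in ${\rm Supp}_{\ov h}(\nu')\cap H\cdot\lambda$, a singleton by (2)), its inverse is $\nu'\mapsto$ the unique element of ${\rm Supp}_{\ov h}(\nu')\cap H\cdot\lambda$, and this inverse has image $\Lambda$; so the symmetric argument shows $\widetilde T_{\lambda'}^\lambda$ maps $\mc C_{\Lambda'}$ into $\mc C_\Lambda$. By the last sentence of the discussion in Section~\ref{ss.translation}, the two functors then restrict to exact, mutually left‑and‑right adjoint functors $F\colon\mc C_\Lambda\to\mc C_{\Lambda'}$ and $G\colon\mc C_{\Lambda'}\to\mc C_\Lambda$ with $F\nabla(\nu)=\nabla(\nu')$, $G\nabla(\nu')=\nabla(\nu)$, and likewise for $\Delta$.

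To see that $F$ and $G$ are mutually inverse, let $\epsilon\colon FG\to\id_{\mc C_{\Lambda'}}$ be the counit of the adjunction $F\dashv G$. For $\nu'\in\Lambda'$ with preimage $\nu$, the adjunction isomorphism $\Hom_{\mc C_{\Lambda'}}(FG\nabla(\nu'),\nabla(\nu'))\cong\Hom_{\mc C_\Lambda}(G\nabla(\nu'),G\nabla(\nu'))=\Hom(\nabla(\nu),\nabla(\nu))=k$ carries $\epsilon_{\nabla(\nu')}$ to the identity of $G\nabla(\nu')$, so $\epsilon_{\nabla(\nu')}$ is a nonzero morphism between two modules each isomorphic to $\nabla(\nu')$, hence an isomorphism. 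A short induction on the length of a $\nabla$-filtration, using exactness of $FG$ and the five lemma, shows $\epsilon_N$ is an isomorphism for every $N\in\mc C_{\Lambda'}$ admitting a $\nabla$-filtration; embedding an arbitrary $N$ into the first two terms $0\to N\to I^0\to I^1$ of an injective resolution in $\mc C_{\Lambda'}$ (whose terms have $\nabla$-filtrations, $\mc C_{\Lambda'}$ being a highest weight category) and applying the five lemma once more, $\epsilon$ is a natural isomorphism, i.e. $FG\cong\id_{\mc C_{\Lambda'}}$. The identical argument applied to the adjunction $G\dashv F$ gives $GF\cong\id_{\mc C_\Lambda}$, so $F=\widetilde T_\lambda^{\lambda'}$ is an equivalence with quasi‑inverse $G=\widetilde T_{\lambda'}^\lambda$.

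The remaining identities are then formal. We already have $\widetilde T_\lambda^{\lambda'}\nabla(\nu)=\nabla(\nu')$ and $\widetilde T_\lambda^{\lambda'}\Delta(\nu)=\Delta(\nu')$; being an equivalence, $F$ carries simple objects to simple objects and injective hulls to injective hulls, so $FL(\nu)$, the simple head of $F\Delta(\nu)=\Delta(\nu')$, equals $L(\nu')$, and $FI_\Lambda(\nu)$, the injective hull of $L(\nu')$ in $\mc C_{\Lambda'}$, equals $I_{\Lambda'}(\nu')$. Finally $F$ sends $\Delta$- (resp. $\nabla$-) filtered modules to $\Delta$- (resp. $\nabla$-) filtered modules and preserves indecomposability, hence sends $T(\nu)$ to an indecomposable tilting module in $\mc C_{\Lambda'}$; since (3) makes $\nu\mapsto\nu'$ a poset isomorphism, its $\Delta$-filtration consists of the $\Delta(\mu')$ with $\mu'\preceq\nu'$, with $\Delta(\nu')$ occurring, so $FT(\nu)=T(\nu')$. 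I expect the genuine work to lie in the first two paragraphs — establishing that the (refined) translation functors and their biadjunction restrict to $\mc C_\Lambda$ and $\mc C_{\Lambda'}$, and in particular pinning down the single‑section statement $\widetilde T_\lambda^{\lambda'}\nabla(\nu)=\nabla(\nu')$ — which is precisely what hypotheses (1)–(3) are engineered to deliver; the equivalence argument of the third paragraph is a routine consequence of biadjointness together with $\dim\Hom(\nabla(\nu'),\nabla(\nu'))=1$.
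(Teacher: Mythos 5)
The paper does not give a proof of this proposition: the text states that it is a combination of Propositions~4.1 in \cite{LT} and \cite{T}, so there is no in-paper proof to compare against. Your argument is the standard one for establishing a translation equivalence and I see no gaps. Hypothesis~(1) ensures that the refined translation functors are defined on $\mc C_\Lambda$ and $\mc C_{\Lambda'}$; the good/Weyl-filtration description of translated (co)standard modules together with (2) gives the single-section computations $\widetilde T_\lambda^{\lambda'}\nabla(\nu)=\nabla(\nu')$, $\widetilde T_\lambda^{\lambda'}\Delta(\nu)=\Delta(\nu')$; the quasihereditarity of $O_{\Lambda'}(k[G])^*$ for the order $\preceq$ (Prop.~3.1(ii) of \cite{T}, \cite{LT}) is used correctly to see that quotients of $\Delta(\nu')$ lie in $\mc C_{\Lambda'}$, so the functors restrict and are then biadjoint; the counit is an isomorphism on $\nabla(\nu')$ because $\End_G(\nabla(\nu'))=k$, and the five-lemma bootstrap via $\nabla$-filtrations on injectives extends this to all of $\mc C_{\Lambda'}$ (dually for $\mc C_\Lambda$). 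The remaining identities for $L(\nu)$, $T(\nu)$ and $I_\Lambda(\nu)$ then follow formally from the equivalence, the poset isomorphism of (3), and the characterisations of simple heads, injective hulls and indecomposable tilting modules in a highest weight category, as you indicate.
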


\begin{propgl}[Translation projection]\label{prop.trans_projection}
Let $h,\ov h\in\{1,2\}$ be distinct, let $\lambda,\lambda'\in\Lambda(s)$ with $\lambda'\in{\rm Supp}_h(\lambda)$ and let $\Lambda\subseteq H\cdot\lambda\cap\Lambda(s),\Lambda'\subseteq H\cdot\lambda'\cap\Lambda(s)$ be $\preceq$-saturated sets. Put $\tilde\Lambda=\{\nu\in\Lambda\,|\,{\rm Supp}_h(\nu)\cap H\cdot\lambda'\ne\emptyset\}$. Assume
\begin{enumerate}[{\rm (1)}]
\item ${\rm Supp}_h(\nu)\cap W_p\cdot\lambda'\subseteq\Lambda(s)$ for all $\nu\in\Lambda$, and ${\rm Supp}_{\ov h}(\nu')\cap W_p\cdot\lambda\subseteq\Lambda(s)$ for all $\nu'\in\Lambda'$.
\item $|{\rm Supp}_h(\nu)\cap H\cdot\lambda'|=1$ for all $\nu\in\tilde\Lambda$, and $|{\rm Supp}_{\ov h}(\nu')\cap H\cdot\lambda|=2$ for all $\nu'\in\Lambda'$.
\item The map $\nu\mapsto\nu':\tilde\Lambda\to\Lambda(s)$ given by ${\rm Supp}_h(\nu)\cap H\cdot\lambda'=\{\nu'\}$ is a 2-to-1 map which has image $\Lambda'$ and preserves the order $\preceq$. For $\nu'\in\Lambda'$ we can write ${\rm Supp}_{\ov h}(\nu')\cap H\cdot\lambda=\{\nu^+,\nu^-\}$ with $\nu^-\prec\nu^+$ and then we have $\Hom_G(\nabla(\nu^+),\nabla(\nu^-))\ne0$ and $\eta'\preceq\nu'\Rightarrow\eta^+\preceq\nu^+$ and $\eta^-\preceq\nu^-$.
\end{enumerate}
Then $\widetilde T_\lambda^{\lambda'}$ restricts to a functor $\mc C_\Lambda\to\mc C_{\Lambda'}$ and $\widetilde T_{\lambda'}^\lambda$ restricts to a functor $\mc C_{\Lambda'}\to\mc C_\Lambda$.
Now let $\nu\in\Lambda$. If $\nu\notin\tilde\Lambda$, then $\widetilde T_\lambda^{\lambda'}\nabla(\nu)=\widetilde T_\lambda^{\lambda'}\Delta(\nu)=\widetilde T_\lambda^{\lambda'}L(\nu)=0$.
For $\nu'\in\Lambda'$ with $\nu^\pm$ as in (3), we have $\widetilde T_\lambda^{\lambda'}\nabla(\nu^\pm)=\nabla(\nu')$, $\widetilde T_\lambda^{\lambda'}\Delta(\nu^\pm)=\Delta(\nu')$, $\widetilde T_\lambda^{\lambda'}L(\nu^-)=L(\nu')$, $\widetilde T_\lambda^{\lambda'}L(\nu^+)=0$, $\widetilde T_{\lambda'}^\lambda T(\nu')=T(\nu^+)$ and $\widetilde T_{\lambda'}^\lambda I_{\Lambda'}(\nu')=I_\Lambda(\nu^-)$.
\end{propgl}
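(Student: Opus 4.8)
The plan is to carry out the classical translation-functor argument of \cite[II.7]{Jan}, in the refined form of \cite{CdV} and \cite{BS}; indeed this statement is essentially Proposition~4.2 of \cite{LT} and of \cite{T}. Throughout I would use that $\widetilde T_\lambda^{\lambda'}$ and $\widetilde T_{\lambda'}^\lambda$ are exact and mutually biadjoint, that (Brauer's formula, recalled before Proposition~\ref{prop.trans_equivalence}) $\widetilde T_\lambda^{\lambda'}\nabla(\nu)$ has a good filtration with sections $\nabla(\mu)$, $\mu\in{\rm Supp}_h(\nu)\cap H\cdot\lambda'$, and $\widetilde T_\lambda^{\lambda'}\Delta(\nu)$ the corresponding Weyl version, and that the $\chi(\nu)$, $\nu\in X^+$, form a $\mb Z$-basis of $(\mb Z X)^W$, so characters and composition multiplicities may be compared freely.

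First I would check that the functors restrict and compute them on costandard and standard objects. Hypothesis (1) is exactly what makes $\widetilde T_\lambda^{\lambda'}$ and $\widetilde T_{\lambda'}^\lambda$ defined as functors on $\mc C_{\Lambda(s)}$, and the good-filtration sections $\nabla(\mu)$ of $\widetilde T_\lambda^{\lambda'}\nabla(\nu)$ for $\nu\in\Lambda$ satisfy $\mu\in{\rm Supp}_h(\nu)\cap H\cdot\lambda'$, which is $\{\nu'\}$ if $\nu\in\tilde\Lambda$ and $\emptyset$ otherwise by (2)--(3); in either case these lie in $\Lambda'$, so, using $L(\nu)\hookrightarrow\nabla(\nu)$ and exactness, $\widetilde T_\lambda^{\lambda'}$ maps $\mc C_\Lambda$ into $\mc C_{\Lambda'}$, and symmetrically $\widetilde T_{\lambda'}^\lambda$ maps $\mc C_{\Lambda'}$ into $\mc C_\Lambda$. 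It follows at once that $\widetilde T_\lambda^{\lambda'}\nabla(\nu)=\widetilde T_\lambda^{\lambda'}\Delta(\nu)=\widetilde T_\lambda^{\lambda'}L(\nu)=0$ for $\nu\in\Lambda\setminus\tilde\Lambda$ (empty filtration, plus $L(\nu)\hookrightarrow\nabla(\nu)$), and that $\widetilde T_\lambda^{\lambda'}\nabla(\mu)=\nabla(\mu')$, $\widetilde T_\lambda^{\lambda'}\Delta(\mu)=\Delta(\mu')$ for $\mu\in\tilde\Lambda$ (a single section); in particular $\widetilde T_\lambda^{\lambda'}\nabla(\nu^\pm)=\nabla(\nu')$ and $\widetilde T_\lambda^{\lambda'}\Delta(\nu^\pm)=\Delta(\nu')$.

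The heart of the matter is the effect on simple modules. For $\mu\in\tilde\Lambda$, applying the exact $\widetilde T_\lambda^{\lambda'}$ to $\Delta(\mu)\twoheadrightarrow L(\mu)\hookrightarrow\nabla(\mu)$ and using the previous paragraph exhibits $\widetilde T_\lambda^{\lambda'}L(\mu)$ simultaneously as a quotient of $\Delta(\mu')$ and a submodule of $\nabla(\mu')$; if it is nonzero, the induced map $\Delta(\mu')\to\nabla(\mu')$ is a nonzero element of $\Hom_G(\Delta(\mu'),\nabla(\mu'))=k$, hence has image $L(\mu')$, so $\widetilde T_\lambda^{\lambda'}L(\mu)\in\{0,L(\mu')\}$; consequently $[\widetilde T_\lambda^{\lambda'}L(\mu):L(\nu')]=0$ unless $\mu\in\{\nu^+,\nu^-\}$. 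Comparing $[\,\cdot\,:L(\nu')]$ on both sides of $\widetilde T_\lambda^{\lambda'}\nabla(\nu^-)=\nabla(\nu')$ via a composition series of $\nabla(\nu^-)$: since $\nu^+\not\preceq\nu^-$, only the factor $L(\nu^-)$ contributes, so $[\widetilde T_\lambda^{\lambda'}L(\nu^-):L(\nu')]=[\nabla(\nu'):L(\nu')]=1$ and therefore $\widetilde T_\lambda^{\lambda'}L(\nu^-)=L(\nu')$. Doing the same with $\widetilde T_\lambda^{\lambda'}\nabla(\nu^+)=\nabla(\nu')$, now both $L(\nu^+)$ and $L(\nu^-)$ may contribute, giving $[\widetilde T_\lambda^{\lambda'}L(\nu^+):L(\nu')]+[\nabla(\nu^+):L(\nu^-)]=1$; since the hypothesis $\Hom_G(\nabla(\nu^+),\nabla(\nu^-))\ne0$ forces $[\nabla(\nu^+):L(\nu^-)]\ge1$, we conclude $[\widetilde T_\lambda^{\lambda'}L(\nu^+):L(\nu')]=0$, i.e.\ $\widetilde T_\lambda^{\lambda'}L(\nu^+)=0$ (and, as a byproduct, $[\nabla(\nu^+):L(\nu^-)]=1$). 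I expect this last vanishing to be the main obstacle of the proof — it is precisely here that the $\Hom$-condition and the $\preceq$-compatibility of the $2$-to-$1$ correspondence in (3) are needed.

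It remains to identify $\widetilde T_{\lambda'}^\lambda T(\nu')$ and $\widetilde T_{\lambda'}^\lambda I_{\Lambda'}(\nu')$. Since $\ot V$ (resp.\ $\ot V^*$) preserves modules possessing both a good and a Weyl filtration and projection onto a linkage class takes direct summands, while a module of $\mc C_{\Lambda(s)}$ with such filtrations that belongs to the $\preceq$-saturated set $\Lambda$ is a tilting object of $\mc C_\Lambda$, the module $\widetilde T_{\lambda'}^\lambda T(\nu')$ is tilting in $\mc C_\Lambda$; applying $\widetilde T_{\lambda'}^\lambda$ to a Weyl filtration of $T(\nu')$ (with $\Delta(\nu')$ as unique top section) and using that $\nu\mapsto\nu'$ and the correspondence $\nu'\mapsto\{\nu^+,\nu^-\}$ preserve $\preceq$, its unique $\preceq$-maximal Weyl section is $\Delta(\nu^+)$, occurring once, so $T(\nu^+)$ is a direct summand; a multiplicity count — comparing characters through $\widetilde T_\lambda^{\lambda'}\widetilde T_{\lambda'}^\lambda$, under which a good filtration has each of its sections doubled, together with the values of $\widetilde T_\lambda^{\lambda'}$ on standard objects found above — rules out any further summand, so $\widetilde T_{\lambda'}^\lambda T(\nu')=T(\nu^+)$. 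Dually, $\widetilde T_{\lambda'}^\lambda I_{\Lambda'}(\nu')$ is injective in $\mc C_\Lambda$ (right adjoint of an exact functor), and by biadjointness $\Hom_G(L(\kappa),\widetilde T_{\lambda'}^\lambda I_{\Lambda'}(\nu'))\cong\Hom_G(\widetilde T_\lambda^{\lambda'}L(\kappa),I_{\Lambda'}(\nu'))$ is nonzero (and one-dimensional) exactly when $\widetilde T_\lambda^{\lambda'}L(\kappa)=L(\nu')$, i.e.\ exactly for $\kappa=\nu^-$ by the simple-module computation; hence $\widetilde T_{\lambda'}^\lambda I_{\Lambda'}(\nu')$ has simple socle $L(\nu^-)$ and equals $I_\Lambda(\nu^-)$.
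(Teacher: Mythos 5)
The paper does not actually prove Proposition~\ref{prop.trans_projection}: it is explicitly recalled as ``a combination of Propositions~4.2 in \cite{LT} and \cite{T}'' and no argument is given here, so there is no in-paper proof to compare against. Your proposal is the standard translation-functor argument one would expect to find in those references: hypothesis (1) together with the good/Weyl filtration description of $\widetilde T_\lambda^{\lambda'}$ to show the functors restrict and to compute them on $\nabla(\nu)$ and $\Delta(\nu)$; the quotient-of-$\Delta(\mu')$/submodule-of-$\nabla(\mu')$ sandwich and $\Hom_G(\Delta(\mu'),\nabla(\mu'))=k$ to pin $\widetilde T_\lambda^{\lambda'}L(\mu)$ to $\{0,L(\mu')\}$; a composition-multiplicity count on $\nabla(\nu^\pm)$, using that the nonzero map $\nabla(\nu^+)\to\nabla(\nu^-)$ forces $[\nabla(\nu^+):L(\nu^-)]\ge1$, to get $\widetilde T_\lambda^{\lambda'}L(\nu^-)=L(\nu')$ and $\widetilde T_\lambda^{\lambda'}L(\nu^+)=0$; biadjointness, exactness and the socle computation to identify $\widetilde T_{\lambda'}^\lambda I_{\Lambda'}(\nu')$ with $I_\Lambda(\nu^-)$. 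All of that is correct and is the right route.

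The one step that you leave genuinely unfinished is $\widetilde T_{\lambda'}^\lambda T(\nu')=T(\nu^+)$. You correctly show $\widetilde T_{\lambda'}^\lambda T(\nu')$ is tilting in $\mc C_\Lambda$ and that $T(\nu^+)$ is a direct summand (unique $\preceq$-maximal Weyl section, occurring once). But the proposed ``multiplicity count \dots rules out any further summand'' is not yet an argument: to compare the Weyl-filtration multiplicities of $\widetilde T_{\lambda'}^\lambda T(\nu')$ (which you can read off: $(T(\nu'):\Delta(\kappa'))$ for $\kappa\in\tilde\Lambda$, else $0$) with those of $T(\nu^+)$, you would need to already know the Weyl multiplicities of $T(\nu^+)$, which is essentially the content of \cite[Thm~6.1]{LT}, \cite[Thm~6.1]{T} — a result that in the source papers is proved \emph{after} and \emph{using} Proposition~4.2. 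So as written this step risks circularity and needs a direct indecomposability argument (or needs to be folded into a joint induction with the tilting-multiplicity theorem). Everything else in the proposal is sound.
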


\begin{remsgl}\label{rems.translation}
1.\ It is easy to see that in the situation of Proposition~\ref{prop.trans_projection} we have a nonsplit extension $$0\to\nabla(\nu^-)\to\widetilde T_{\lambda'}^\lambda\nabla(\nu')\to\nabla(\nu^+)\to0\,:$$ if it were split, then $\dim\Hom_G(\nabla(\nu^+),\widetilde T_{\lambda'}^\lambda\nabla(\nu'))>1$, but using the adjoint functor property it is clear that this dimension is $1$. If we now consider the long exact cohomology sequence associated to the above short exact sequence and the functor $\Hom_G(\nabla(\nu^+),-)$, and we also use the adjoint functor property (which holds for all $\Ext_G^i$), then we obtain $\dim\Hom_G(\nabla(\nu^+),\nabla(\nu^-))=\dim\Ext^1_G(\nabla(\nu^+),\nabla(\nu^-))=1$ and $\Ext^i_G(\nabla(\nu^+),\nabla(\nu^-))=0$ for $i>1$. See also \cite[II.2.14 and 4.13]{Jan}.\\
2.\ From the proofs of Theorems~6.1 in \cite{LT} and \cite{T} we deduce that the assumptions of Proposition~\ref{prop.trans_equivalence} are satisfied in the following situations
where we will always take $\Lambda=\Lambda(s)\cap H\cdot\lambda$ and $\Lambda'=\Lambda(s)\cap H\cdot\lambda'$ once we have chosen $\lambda,\lambda'\in\Lambda(s)$. We will derive the ``moves" from $co_\lambda$ rather than from $c_\lambda$ as in \cite[Thm~6.1]{LT} and \cite[Thm~6.1]{T}. If $co_\lambda$ is of the form
$\xy
(0,0)="a1";(0,0)*{\cdots};
(5,0)="a2";(5,-1)*{\land};
(10,0)="a3";(10,0)*{\cdots};
(15,0)="a4";(15,0)*{\bullet};
(20,0)="a5";(20,1)*{\vee};(20,-4)*{a};
(25,0)="a6";(25,0)*{\cdots};(25,-6.5)*{\ }; 
"a5";"a2"**\crv{(20,8)&(5,6)}; 
\endxy\ $ when $G=\GL_n$ or $G=\Sp_n$ and the cap is to the right of the wall, then we choose
$co_\lambda'=
\xy
(0,0)="a1";(0,0)*{\cdots};
(5,0)="a2";(5,-1)*{\land};
(10,0)="a3";(10,0)*{\cdots};
(15,0)="a4";(15,1)*{\vee};
(20,0)="a5";(20,0)*{\bullet};(20,-3)*{a};
(25,0)="a6";(25,0)*{\cdots};(25,-6.5)*{\ }; 
"a4";"a2"**\crv{(15,7)&(5,6)}; 
\endxy\ .$
If $G=\Sp_n$ and the cap is to the left of the wall, we let the curves go below the horizontal line.
The bijection $\nu\mapsto\nu':\Lambda\to\Lambda'$ is then given by
\parbox[c][.9cm]{3.2cm}{$\begin{smallmatrix}
\cdots&{\rm o}&\land&\cdots&\mapsto&\cdots&\land&{\rm o}&\cdots\\
\cdots&{\rm o}&\vee&\cdots&\mapsto&\cdots&\vee&{\rm o}&\cdots\\
&&a&&&&&a&
\end{smallmatrix}
$}.
In case the $(a-1)$-node in the arrow diagram of $\lambda$ carries an $\times$ we get an $\times$ at the $a$-node of $\lambda'$ and the bijection $\nu\mapsto\nu':\Lambda\to\Lambda'$
is then given by
\parbox[c][.9cm]{3.2cm}{$\begin{smallmatrix}
\cdots&\times&\land&\cdots&\mapsto&\cdots&\land&\times&\cdots\\
\cdots&\times&\vee&\cdots&\mapsto&\cdots&\vee&\times&\cdots\\
&&a&&&&&a&
\end{smallmatrix}
$}.
One can also move the right end node of the cap one step to the right past an empty node or past an $\times$ (that is just the inverse bijection) or move the left end node one step past an empty node or past an $\times$. In case $G=\Sp_n$ one can also move one of the end nodes of a curl one step past an empty node or past an $\times$. The diagrammatic descriptions of the bijections are the same.
Furthermore, one can turn a curl with left end node at 0 into a cap by replacing the arrow at the 0-node by its opposite. In terms of the weights this bijection is just the identity. Finally, one can turn a curl with right end node the last node into a cap by replacing the arrow at the last node by its opposite. The bijection $\nu\mapsto\nu':\Lambda\to\Lambda'$ is given by 
\parbox[c][.9cm]{2.3cm}{$\begin{smallmatrix}
\cdots&\vee&\mapsto&\cdots&\land\\
\cdots&\land&\mapsto&\cdots&\vee
\end{smallmatrix}
$.}
In most applications we start with a cap or curl with no caps or curls inside it and then repeatedly apply moves as above until we have a cap with consecutive end nodes. Then we can apply the next remark.\\
3.\ From the aforementioned proofs we can also deduce that the assumptions of Proposition~\ref{prop.trans_projection} are satisfied in the following situations where we will always take $\Lambda=\Lambda(s)\cap H\cdot\lambda$ and $\Lambda'=\Lambda(s)\cap H\cdot\lambda'$ once we have chosen $\lambda,\lambda'\in\Lambda(s)$. We will derive the ``moves" from $co_\lambda$ rather than $c_\lambda$ as in \cite[Thm~6.1]{LT} and \cite[Thm~6.1]{T}, so we will have $\lambda=\lambda^-$, rather than $\lambda=\lambda^+$.
The set $\tilde\Lambda$ will always consist of the $\nu\in\Lambda$ for which the cap or curl of $co_\lambda$ under consideration is oriented in $co_{\lambda\nu}$.
If $co_\lambda$ is of the form
$\xy
(0,0)="a1";(0,0)*{\cdots};
(5,0)="a2";(5,-1)*{\land};
(10,0)="a3";(10,1)*{\vee};(10,-4)*{a};
(15,0)="a4";(15,0)*{\cdots};
"a3";"a2"**\crv{(10,5)&(5,5)}; 
\endxy\ $ when $G=\GL_n$ or $G=\Sp_n$ and the cap is to the right of the wall, or $co_\lambda=
\xy
(0,0)="a1";(0,0)*{\cdots};
(5,0)="a2";(5,-1)*{\land};
(10,0)="a3";(10,1)*{\vee};(10,-4)*{\,a};
(15,0)="a4";(15,0)*{\cdots};(15,-7)*{\ }; 
"a3";"a2"**\crv{(10,-5)&(5,-5)}; 
\endxy\ $ when $G=\Sp_n$ and the cap is to the left of the wall, then we choose
$\lambda'=
\xy
(0,0)="a1";(0,0)*{\cdots};
(5,0)="a2";(5,0)*{\bullet};
(10,0)="a3";(10,1)*{\vee};(10,-1)*{\land};(10,-4)*{a};
(15,0)="a4";(15,0)*{\cdots};(15,-7)*{\ }; 
\endxy\ .$
and the projection $\nu\mapsto\nu':\tilde\Lambda\to\Lambda'$ is given by
\parbox[c][.9cm]{3.55cm}{$\begin{smallmatrix}
\cdots&\land&\vee&\cdots\\
\cdots&\vee&\land&\cdots
\end{smallmatrix}
\mapsto
\begin{smallmatrix}
\cdots&{\rm o}&\times&\cdots&\\
\end{smallmatrix}
$}.
In case $G=\GL_n$ or $G=\Sp_n$ and $a>1$ we can also choose
$\lambda'=
\xy
(0,0)="a1";(0,0)*{\cdots};
(5,0)="a3";(5,1)*{\vee};(5,-1)*{\land};
(10,0)="a3";(10,0)*{\bullet};(10,-4)*{a};
(15,0)="a4";(15,0)*{\cdots};(15,-7)*{\ }; 
\endxy\ $
and then the projection $\nu\mapsto\nu':\tilde\Lambda\to\Lambda'$ is given by
\parbox[c][.9cm]{3.55cm}{$\begin{smallmatrix}
\cdots&\land&\vee&\cdots\\
\cdots&\vee&\land&\cdots
\end{smallmatrix}
\mapsto
\begin{smallmatrix}
\cdots&\times&{\rm o}&\cdots
\end{smallmatrix}
$}.
Finally, if $co_\lambda$ has a curl at the first two nodes, then we can choose
$\lambda'=
\xy
(0,0)="a1";(0,0)*{\bullet};
(5,0)="a2";(5,1)*{\vee};(5,-1)*{\land};(5,-4)*{a};
(10,0)="a3";(10,0)*{\cdots};(10,-7)*{\ }; 
\endxy\ .$
and the projection $\nu\mapsto\nu':\tilde\Lambda\to\Lambda'$ is given by
\parbox[c][.9cm]{2.7cm}{$\begin{smallmatrix}
\land&\land&\cdots\\
\vee&\vee&\cdots
\end{smallmatrix}
\mapsto
\begin{smallmatrix}
{\rm o}&\times&\cdots
\end{smallmatrix}
$}. However, this is just the combination of the trivial move mentioned near the end of the previous remark and the above ``cap-contraction".
\end{remsgl}

\section{The polynomials}\label{s.polynomials}
Recall the definitions of $\preceq$, $\Lambda(s)$ and $H$ from Section~\ref{ss.reductive_groups}. 
Throughout this section we assume that $\Lambda,\Lambda'\subseteq\Lambda(s)$ are the intersection of $\Lambda(s)$ with an $H$-orbit under the dot action.
\begin{defgl}
For $\lambda,\mu\in\Lambda$ we define the polynomials $d_{\lambda\mu}\in\mb Z[q]$ by $$d_{\lambda\mu}=\begin{cases}
q^{\text{number of clockwise caps and curls in\ } c_{\lambda\mu}}, &\text{if $\mu\preceq\lambda$ and $c_{\lambda\mu}$ is oriented,}\\
0&\text{otherwise.}
\end{cases}$$
Clearly the matrix $(d_{\lambda\mu})_{\lambda,\mu}$ is lower uni-triangular for the ordering $\preceq$:
$d_{\lambda\lambda}=1$ and $d_{\lambda\mu}\ne0$ implies $\mu\preceq\lambda$. Next we define the polynomials $p_{\lambda\mu}$
by requiring that the matrix $(p_{\lambda\mu})_{\lambda,\mu}$ is the inverse of $(d_{\lambda\mu}(-q))_{\lambda,\mu}$.
This inverse is then also lower uni-triangular for the ordering $\preceq$.
\end{defgl}

\begin{defgl}
For $\lambda,\mu\in\Lambda$ we define the polynomials $e_{\lambda\mu}\in\mb Z[q]$ by $$e_{\lambda\mu}=\begin{cases}
q^{\text{number of anti-clockwise caps and curls in\ } co_{\mu\lambda}}, &\text{if $\mu\preceq\lambda$ and $co_{\mu\lambda}$ is oriented,}\\
0&\text{otherwise.}
\end{cases}$$
Clearly the matrix $(e_{\lambda\mu})_{\lambda,\mu}$ is lower uni-triangular for the ordering $\preceq$. Next we define the polynomials $r_{\lambda\mu}$
by requiring that the matrix $(r_{\lambda\mu})_{\lambda,\mu}$ is the inverse of $(e_{\lambda\mu}(-q))_{\lambda,\mu}$.
This inverse is then also lower uni-triangular for the ordering $\preceq$.
\end{defgl}

\begin{remgl}\label{rem.previous_results}
Let $\lambda,\mu\in\Lambda_p$. In \cite[Thm~6.1]{LT} and \cite[Thm~6.1]{T} it was shown that $(T(\lambda):\nabla(\mu))=d_{\lambda\mu}(1)$ if $\mu\preceq\lambda$, and 0 otherwise, and in \cite[Thm~7.1]{LT} and \cite[Thm~7.1]{T} it was shown that $[\nabla(\lambda):L(\mu)]=e_{\lambda\mu}(1)$ if $\mu\preceq\lambda$, and 0 otherwise.
\end{remgl}

The proof of the following lemma is easy, we leave it to the reader.
\begin{lemgl}\label{lem.d_and_e_eqn}\ 
\begin{enumerate}[{\rm (i)}]
\item Assume we are in the situation of Remark~\ref{rems.translation}.2. Then $d_{\lambda\mu}=d_{\lambda'\mu'}$ and $e_{\lambda\mu}=e_{\lambda'\mu'}$ for $\lambda,\mu\in\Lambda$. 
\item Assume we are in the situation of Remark~\ref{rems.translation}.3. 
For $\lambda=\lambda^+\in\Lambda$ we have
\begin{align*}
d_{\lambda^+\mu^+}&=d_{\lambda'\mu'},\text{\qquad for\ }\mu^+\in\Lambda,\\
d_{\lambda^+\mu^-}&=qd_{\lambda'\mu'},\text{\qquad for\ }\mu^-\in\Lambda,\\
d_{\lambda^+\mu}&=0\text{\qquad for\ }\mu\in\Lambda\text{\ not of the form\ }\mu^\pm.
\end{align*}
For $\mu=\mu^-\in\Lambda$ we have
\begin{align*}
e_{\lambda^-\mu^-}&=e_{\lambda'\mu'},\text{\qquad for\ }\lambda^-\in\Lambda,\\
e_{\lambda^+\mu^-}&=qe_{\lambda'\mu'},\text{\qquad for\ }\lambda^+\in\Lambda\\
e_{\lambda\mu^-}&=0\text{\qquad for\ }\lambda\in\Lambda\text{\ not of the form\ }\lambda^\pm.
\end{align*}
\end{enumerate}
\end{lemgl}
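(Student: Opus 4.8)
The plan is to argue purely combinatorially, exploiting that $d_{\lambda\mu}$ and $e_{\lambda\mu}$ are \emph{monomials} (or $0$): $d_{\lambda\mu}$ is completely determined by the two data (a)~whether $\mu\preceq\lambda$ and $c_{\lambda\mu}$ is oriented, and (b)~in that case the number of clockwise caps and curls in $c_{\lambda\mu}$; similarly $e_{\lambda\mu}$ is determined by the analogous data for $co_{\mu\lambda}$, with ``anti-clockwise'' in place of ``clockwise''. So in each assertion it suffices to check that (a)~holds on the left-hand side iff it holds on the right-hand side, and that when it does the two counts agree (up to the asserted shift by $1$). Throughout I use the characterisation of $\preceq$ in terms of the associated pairs of $(\land,\vee)$-sequences from Section~\ref{ss.arrow_diagrams} (namely $\mu\preceq\lambda\iff\mu\in\Lambda(s)\cap H\cdot\lambda$ and $\xi^1\preceq\eta^1,\ \xi^2\preceq\eta^2$), the well-definedness of the cap(-curl) diagram, and the $\land\leftrightarrow\vee$, clockwise $\leftrightarrow$ anti-clockwise symmetry that interchanges the $c_\lambda$-picture and the $co_\lambda$-picture, so that the statements for $d$ and for $e$ follow from one and the same argument applied to $c_{\lambda\mu}$ and $co_{\mu\lambda}$ respectively.

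For part (i): each bijection $\nu\mapsto\nu'$ of Remark~\ref{rems.translation}.2 is a local modification of the arrow diagram near a single distinguished cap or curl of $c_\lambda$ (resp.\ $co_\lambda$): either relocating one endpoint past a node carrying ${\rm o}$ or $\times$, or --- in the ``curl at $0$'' and ``curl at the last node'' cases --- replacing the boundary arrow by its opposite, which converts that curl into a cap. I would run through this short list of moves and check in each case that (1)~the cap(-curl) diagram of $\lambda'$ is obtained from that of $\lambda$ by the obvious bijection on caps and curls (cap $\mapsto$ cap, curl $\mapsto$ curl, except a boundary curl $\mapsto$ cap), all other caps and curls being untouched --- here well-definedness of the cap(-curl) diagram is used; and (2)~since the \emph{same} move is applied to $\mu$ to produce $\mu'$ (legitimate because $\mu\preceq\lambda$ forces the arrow diagram of $\mu$ to have its single arrows and $\times$'s at the same nodes as that of $\lambda$), the ordered pair of foot-arrows of each cap/curl, read off from $\mu$ resp.\ $\mu'$, is unchanged; in the boundary cases the boundary arrow is flipped simultaneously in $\lambda$ and in $\mu$, and flipping one foot while passing between ``curl'' and ``cap'' preserves the clockwise/anti-clockwise label. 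Hence $c_{\lambda\mu}$ and $c_{\lambda'\mu'}$ have the same caps and curls with the same orientations, and $\mu\preceq\lambda\iff\mu'\preceq\lambda'$ by the $(\land,\vee)$-criterion, so $d_{\lambda\mu}=d_{\lambda'\mu'}$; likewise $e_{\lambda\mu}=e_{\lambda'\mu'}$.

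For part (ii): the distinguished cap of $co_\lambda$ (with $\lambda=\lambda^-$) sits on two consecutive nodes $a-1,a$ carrying $\land,\vee$, and $\lambda'$ arises by ``contracting'' it to an ${\rm o}$ and an $\times$. The key combinatorial claim is that, since two consecutive nodes carrying $\vee\land$ (for $c$) or $\land\vee$ (for $co$) are always joined to \emph{each other} by a cap in any run of the greedy algorithm, $c_{\lambda'}$ is exactly $c_{\lambda^+}$ with this cap and its two arrows deleted and ${\rm o},\times$ inserted (all remaining caps and curls unchanged), and dually $co_{\mu'}$ is $co_{\mu^-}$ with the distinguished cap deleted. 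Granting this, treat the three cases for $d_{\lambda^+\bullet}$ (symmetrically $e_{\bullet\mu^-}$). If $\mu=\mu^+$, then $\mu^+$ carries $\vee\land$ at $a-1,a$, the same pair as $\lambda^+$, so the distinguished (anti-clockwise) cap stays anti-clockwise over $\mu^+$, contributing $q^0$; deleting it gives $c_{\lambda'\mu'}$ (note $\mu^+$ and $\mu'$ agree away from $a-1,a$, and those two nodes carry no endpoint of $c_{\lambda'}$), and since $\mu^+\preceq\lambda^+\iff\mu'\preceq\lambda'$ --- contracting a balanced consecutive pair simultaneously from $\eta^i$ and $\xi^i$ leaves $l_j(\eta^i,\xi^i)$ unchanged away from the removed positions and removes only constraints that were implied by the surviving ones --- we get $d_{\lambda^+\mu^+}=d_{\lambda'\mu'}$. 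If $\mu=\mu^-$, then $\mu^-$ carries the opposite pair $\land\vee$, so over $\mu^-$ the distinguished cap becomes clockwise, contributing one extra power of $q$, whence $d_{\lambda^+\mu^-}=q\,d_{\lambda'\mu'}$. If $\mu$ is of neither form, then (as $\mu\in\Lambda$) it carries $\land\land$ or $\vee\vee$ at $a-1,a$, so the distinguished cap is not oriented in $c_{\lambda^+\mu}$ and $d_{\lambda^+\mu}=0$. The three identities for $e$ follow verbatim with $(c,\lambda^+)$ replaced by $(co,\mu^-)$ and $\lambda^\pm$ in the role of $\mu^\pm$.

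I expect the main obstacle to be item (1) together with the key claim above: making precise that the local moves and the cap-contraction alter the cap(-curl) diagram only locally and in the asserted way. This rests on the order-independence of the greedy cap(-curl) construction plus a short case check that the distinguished cap/curl does not interfere with the rest --- immediate for the contraction, since adjacent nodes have nothing between them, but requiring a little care for the boundary curl-to-cap moves in the $\Sp_n$ case, where the curling of the leftover $\land\cdots\land\vee\cdots\vee$ sequence must be tracked. The remaining bookkeeping --- orientedness and the $\preceq$-comparison on both sides --- is routine given the $(\land,\vee)$-sequence description of $\preceq$, which is why, as the author says, the proof can safely be left to the reader.
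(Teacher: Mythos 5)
The paper itself gives no proof — it says ``The proof of the following lemma is easy, we leave it to the reader'' — so there is no written argument to compare against, but your combinatorial argument is exactly the intended one and is correct. The monomial structure of $d_{\lambda\mu}$ and $e_{\lambda\mu}$ reduces everything to tracking orientedness and the clockwise/anti-clockwise count, and your two main observations carry the day: for (i), the moves of Remark~\ref{rems.translation}.2 carry caps to caps and curls to curls (or a boundary curl to a cap with a simultaneous flip of the boundary arrow, which preserves orientation labels), so $c_{\lambda'\mu'}$ and $c_{\lambda\mu}$ differ only by a re-drawing; for (ii), the cap contraction at consecutive nodes $a-1,a$ is local because, as you note, a consecutive $\vee\land$ (resp.\ $\land\vee$) pair is always a cap of $c_\lambda$ (resp.\ $co_\mu$) regardless of how the greedy procedure is run, and removing it leaves everything else intact, while the distinguished cap contributes $q^0$, $q^1$, or ``non-oriented'' according to whether $\mu$ restricts to $\vee\land$, $\land\vee$, or a same-arrow pair at those two nodes. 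You also correctly handle the $\preceq$-comparison on both sides via the $(\land,\vee)$-sequence criterion. The one place that deserves the extra care you flag is the well-definedness/locality of the cap(-curl) construction under the boundary curl-to-cap moves in the $\Sp_n$ case; a quick case check of the four possible arrow pairs over $\mu$ at the two affected nodes (which you implicitly invoke with ``flipping one foot \ldots preserves the clockwise/anti-clockwise label'') settles it, so there is no real gap.
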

As in \cite[Sect.~7]{CdV} one can show that
\begin{align*}
d_{\lambda^+\mu^+}&=q^{-1}d_{\lambda^-\mu^+}+d_{\lambda^-\mu^-}\text{\ \ and}\\
d_{\lambda^+\mu^-}&=qd_{\lambda^-\mu^-}+d_{\lambda^-\mu^+}\,.
\end{align*}


\begin{propgl}\label{prop.p_and_r_eqn}\ 
\begin{enumerate}[{\rm (i)}]
\item Assume we are in the situation of Remark~\ref{rems.translation}.2. Then $p_{\lambda\mu}=p_{\lambda'\mu'}$ and $r_{\lambda\mu}=r_{\lambda'\mu'}$ for $\lambda,\mu\in\Lambda$. 
\item Assume we are in the situation of Remark~\ref{rems.translation}.3.  
For $\lambda=\lambda^+\in\Lambda$ we have
\begin{align}\label{eq.p_trans_proj1} 
p_{\lambda^+\mu^+}&=p_{\lambda'\mu'}+qp_{\lambda^-\mu^+},
\end{align}
for all $\mu^+\in\Lambda$ and
\begin{align}\label{eq.p_trans_proj2}
p_{\lambda^+\mu}&=qp_{\lambda^-\mu}
\end{align}
for all $\mu\in\Lambda$ not of the form $\mu^+$.\\
For $\mu=\mu^-\in\Lambda$ we have
\begin{align}\label{eq.r_trans_proj1} 
r_{\lambda^-\mu^-}&=r_{\lambda'\mu'}+qr_{\lambda^-\mu^+},
\end{align}
for all $\lambda^-\in\Lambda$ and
\begin{align}\label{eq.r_trans_proj2}
r_{\lambda\mu^-}&=qr_{\lambda\mu^+}
\end{align}
for all $\lambda\in\Lambda$ not of the form $\lambda^-$.
\end{enumerate}
\end{propgl}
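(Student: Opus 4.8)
The plan is to derive both parts by elementary linear algebra from the combinatorial identities of Lemma~\ref{lem.d_and_e_eqn}, using only that the matrices $(d_{\lambda\mu}(-q))$, $(e_{\lambda\mu}(-q))$ and their $\Lambda'$-counterparts are finite and lower unitriangular, hence invertible over $\mb Z[q]$. Part~(i) is then immediate: in the situation of Remark~\ref{rems.translation}.2 the bijection $\nu\mapsto\nu'$ of Proposition~\ref{prop.trans_equivalence}(3) is an order isomorphism $(\Lambda,\preceq)\to(\Lambda',\preceq)$, so by Lemma~\ref{lem.d_and_e_eqn}(i) it identifies $(d_{\lambda\mu}(-q))_{\lambda,\mu\in\Lambda}$ with $(d_{\lambda'\mu'}(-q))_{\lambda',\mu'\in\Lambda'}$, and likewise for the $e$-matrices; passing to inverses gives $p_{\lambda\mu}=p_{\lambda'\mu'}$ and $r_{\lambda\mu}=r_{\lambda'\mu'}$.

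For the $p$-identities of part~(ii) I would start from the defining relation $\sum_{\nu\in\Lambda}d_{\lambda\nu}(-q)\,p_{\nu\mu}=\delta_{\lambda\mu}$ and specialise $\lambda=\lambda^+$, since this is precisely the row of $d$-entries computed in Lemma~\ref{lem.d_and_e_eqn}(ii). Substituting those values (remembering $q\mapsto-q$) annihilates every term except $\nu=\nu^\pm$, $\nu'\in\Lambda'$, which carry coefficients $d_{\lambda'\nu'}(-q)$ and $-q\,d_{\lambda'\nu'}(-q)$, so the relation becomes
\[
\sum_{\nu'\in\Lambda'}d_{\lambda'\nu'}(-q)\bigl(p_{\nu^+\mu}-q\,p_{\nu^-\mu}\bigr)=\delta_{\lambda^+\mu}\qquad(\mu\in\Lambda).
\]
Since $\lambda^+$ is never of the form $\mu^-$ nor an element of $\Lambda\setminus\tilde\Lambda$, and $\lambda^+=\mu^+$ iff $\lambda'=\mu'$, the right-hand side is $\delta_{\lambda'\mu'}$ when $\mu=\mu^+$ and $0$ otherwise. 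Comparing with the $\Lambda'$-relation $\sum_{\nu'}d_{\lambda'\nu'}(-q)\,p_{\nu'\mu'}=\delta_{\lambda'\mu'}$ and cancelling the invertible matrix $(d_{\lambda'\nu'}(-q))_{\lambda',\nu'\in\Lambda'}$ then gives $p_{\nu^+\mu^+}-q\,p_{\nu^-\mu^+}=p_{\nu'\mu'}$ and $p_{\nu^+\mu}-q\,p_{\nu^-\mu}=0$ for $\mu\in\Lambda$ not of the form $\mu^+$, which after renaming $\nu'$ as $\lambda'$ are exactly \eqref{eq.p_trans_proj1} and \eqref{eq.p_trans_proj2}.

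The $r$-identities come out of the transposed computation: starting from $\sum_{\nu\in\Lambda}r_{\lambda\nu}\,e_{\nu\mu}(-q)=\delta_{\lambda\mu}$ and specialising $\mu=\mu^-$ (the column of $e$-entries described in Lemma~\ref{lem.d_and_e_eqn}(ii)), the same substitution turns this into $\sum_{\nu'\in\Lambda'}\bigl(r_{\lambda\nu^-}-q\,r_{\lambda\nu^+}\bigr)e_{\nu'\mu'}(-q)=\delta_{\lambda\mu^-}$, whose right-hand side equals $\delta_{\lambda'\mu'}$ if $\lambda=\lambda^-$ and $0$ otherwise. Cancelling $(e_{\nu'\mu'}(-q))_{\nu',\mu'\in\Lambda'}$ against $\sum_{\nu'}r_{\lambda'\nu'}\,e_{\nu'\mu'}(-q)=\delta_{\lambda'\mu'}$ then yields \eqref{eq.r_trans_proj1} and \eqref{eq.r_trans_proj2}.

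The computation itself is routine; the only point that needs genuine attention is making the Kronecker deltas collapse correctly, i.e.\ matching the ``not of the form $\mu^+$'' and ``not of the form $\lambda^-$'' clauses with the structure of $\tilde\Lambda$ and of the $2$-to-$1$ map $\nu\mapsto\nu'$ from Remark~\ref{rems.translation}.3. Once one notes that $\tilde\Lambda$ is the disjoint union of $\{\nu^+:\nu'\in\Lambda'\}$ and $\{\nu^-:\nu'\in\Lambda'\}$, this is straightforward, and I expect no real obstacle. (Only Lemma~\ref{lem.d_and_e_eqn} is used, not the two auxiliary $d$-recursions displayed before the proposition.)
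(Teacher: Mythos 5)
Your proof is correct, and it is a genuinely different route from the paper's. The paper isolates $\tilde p_{\lambda\mu}$ from the relation $\sum_\nu \tilde p_{\lambda\nu}d_{\nu\mu}=\delta_{\lambda\mu}$ (i.e.\ from $PD=I$) and runs a $\succeq$-induction on $\mu$, plugging the induction hypothesis into the terms $\tilde p_{\lambda\nu}$ with $\nu\succ\mu$ and regrouping; dually for the $r$-identities with a $\preceq$-induction on $\lambda$. You instead use the other product $DP=I$, which lets you substitute the entire $\lambda^+$-row of the $d$-matrix from Lemma~\ref{lem.d_and_e_eqn}(ii) all at once, reducing the claim to a single invertible-matrix cancellation over $\Lambda'$ with no induction at all. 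The only point you have to be careful about — that $\tilde\Lambda$ splits as the disjoint union $\{\nu^+\}_{\nu'\in\Lambda'}\sqcup\{\nu^-\}_{\nu'\in\Lambda'}$, so that $\lambda^+$ is of the form $\mu^+$ precisely when $\lambda'=\mu'$, and never of the form $\mu^-$ nor outside $\tilde\Lambda$ — you explicitly note, and it follows from the 2-to-1 structure in Remark~\ref{rems.translation}.3 / Proposition~\ref{prop.trans_projection}(3). The trade-off is that the paper's inductive presentation makes the base case and the role of $\tilde p_{\lambda^-\mu}$ explicit, while yours buys a shorter, induction-free argument; both use exactly Lemma~\ref{lem.d_and_e_eqn} and nothing else, and neither needs the two displayed $d$-recursions preceding the proposition.
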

\begin{proof}
(i).\ By Lemma~\ref{lem.d_and_e_eqn}(i) the matrices $(d_{\lambda\mu})_{\lambda,\mu}$ and $(d_{\lambda'\mu'})_{\lambda,\mu}$ are the same, so
their inverses $(p_{\lambda\mu})_{\lambda,\mu}$ and $(p_{\lambda'\mu'})_{\lambda,\mu}$ are also the same. The second identity is proved in the same way.\\
(ii).\ We will prove \eqref{eq.p_trans_proj1} and \eqref{eq.p_trans_proj2} by $\succeq$-induction on $\mu$ with $\mu=\lambda$ as (trivial) basis case.
Put $\tilde p_{\lambda\mu}=p_{\lambda\mu}(-q)$. By the definition of the $p_{\lambda\mu}$ and the induction hypothesis we have
\begin{align*}
\tilde p_{\lambda\mu}&=\delta_{\lambda\mu}-\sum_{\mu\prec\nu\preceq\lambda}\tilde p_{\lambda\nu}d_{\nu\mu}\\
&=\delta_{\lambda\mu}-\mathop{\sum_{\mu\prec\nu\preceq\lambda}}_{\nu=\nu^+}(\tilde p_{\lambda'\nu'}-q\tilde p_{\lambda^-\nu})d_{\nu\mu}+
q\mathop{\sum_{\mu\prec\nu\preceq\lambda}}_{\nu\ne\nu^+}\tilde p_{\lambda^-\nu}d_{\nu\mu}\\
&=\bigg(\delta_{\lambda\mu}-\mathop{\sum_{\mu\prec\nu\preceq\lambda}}_{\nu=\nu^+}\tilde p_{\lambda'\nu'}d_{\nu\mu}\bigg)+\bigg(q\sum_{\mu\prec\nu\preceq\lambda^-}\tilde p_{\lambda^-\nu}d_{\nu\mu}\bigg)\,.
\end{align*}
The second bracketed expression equals $q\delta_{\lambda^-\mu}-q\tilde p_{\lambda^-\mu}$ by the definition of the $p_{\lambda\mu}$. Denote the first bracketed expression by $E$.
Then we have $$E=\begin{cases}
\delta_{\lambda'\mu'}-\sum_{\mu'\prec\nu'\preceq\lambda'}\tilde p_{\lambda'\nu'}d_{\nu'\mu'}=\tilde p_{\lambda'\mu'}&\text{if\ }\mu=\mu^+,\\
-q\sum_{\mu'\preceq\nu'\preceq\lambda'}\tilde p_{\lambda'\nu'}d_{\nu'\mu'}=-q\delta_{\lambda'\mu'}=-q\delta_{\lambda^-\mu}&\text{if\ }\mu=\mu^-,\\ 
0&\text{if\ }\mu\ne\mu^\pm,
\end{cases}$$
where we used Lemma~\ref{lem.d_and_e_eqn}(ii) and that, when $\mu=\mu^-$, we can have $\nu=\mu^+$ in the sum in $E$.
It follows that $$\tilde p_{\lambda\mu}=\begin{cases}
\tilde p_{\lambda'\mu'}-q\tilde p_{\lambda^-\mu}&\text{if\ }\mu=\mu^+,\\
-q\tilde p_{\lambda^-\mu}&\text{if\ }\mu\ne\mu^+,
\end{cases}$$ as required.

Equations \eqref{eq.r_trans_proj1} and \eqref{eq.r_trans_proj2} are proved by $\preceq$-induction on $\lambda$ with $\lambda=\mu$ as (trivial) basis case.
Put $\tilde r_{\lambda\mu}=r_{\lambda\mu}(-q)$. By the definition of the $r_{\lambda\mu}$ and the induction hypothesis we have
\begin{align*}
\tilde r_{\lambda\mu}&=\delta_{\lambda\mu}-\sum_{\mu\preceq\nu\prec\lambda}e_{\lambda\nu}\tilde r_{\nu\mu}\\
&=\delta_{\lambda\mu}-\mathop{\sum_{\mu\preceq\nu\prec\lambda}}_{\nu=\nu^-}e_{\lambda\nu}(\tilde r_{\nu'\mu'}-q\tilde r_{\nu\mu^+})+
q\mathop{\sum_{\mu\preceq\nu\prec\lambda}}_{\nu\ne\nu^-}e_{\lambda\nu}\tilde r_{\nu\mu^+}\\
&=\bigg(\delta_{\lambda\mu}-\mathop{\sum_{\mu\preceq\nu\prec\lambda}}_{\nu=\nu^-}e_{\lambda\nu}\tilde r_{\nu'\mu'}\bigg)+\bigg(q\sum_{\mu^+\preceq\nu\prec\lambda}e_{\lambda\nu}\tilde r_{\nu\mu^+}\bigg)\,.
\end{align*}
We leave the rest of the proof to the reader.
\end{proof}

\begin{remsgl}\label{rems.polynomials}
1.\ Obviously $\lambda\ne\mu\Rightarrow d_{\lambda\mu},e_{\lambda\mu}\in q\mb Z[q]$, so we also have $\lambda\ne\mu\Rightarrow p_{\lambda\mu},r_{\lambda\mu}\in q\mb Z[q]$.\\
2.\ Using elementary properties of $n(\lambda,\mu)$ and the $l_i(\eta,\xi)$, see e.g. \cite[p175,176]{CdV}, one can easily show by induction that $p_{\lambda\mu}\ne0\Leftrightarrow\mu\preceq\lambda$
and that $p_{\lambda\mu}\ne0\Rightarrow\deg(p_{\lambda\mu})=n(\lambda,\mu)$ and the degrees of the terms of $p_{\lambda\mu}$ have the same parity.
Similarly, we obtain $r_{\lambda\mu}\ne0\Leftrightarrow\mu\preceq\lambda$ and $r_{\lambda\mu}\ne0\Rightarrow\deg(r_{\lambda\mu})=n(\lambda,\mu)$ and the degrees of the terms of $r_{\lambda\mu}$ have the same parity.\\
3.\ If $G=\Sp_n$ and the arrow diagram of any $\lambda\in\Lambda$ has an arrow at $0$, then assume that the parity of the number of $\land$'s in the arrow diagrams of the weights in $\Lambda$ is fixed.
Let $\lambda,\mu\in\Lambda$ and let $(\eta^1,\eta^2)$ and $(\xi^1,\xi^2)$ be the associated pairs of $(\land,\vee)$-sequences. Then it is easy to see that the polynomials $d_{\lambda\mu}$, $p_{\lambda\mu}$, $e_{\lambda\mu}$ and $r_{\lambda\mu}$
only depend on $(\eta^1,\eta^2)$ and $(\xi^1,\xi^2)$. In fact one can define the cap and cap-curl diagrams for $(\land,\vee)$-sequences: Just do this as on the left side of the wall in the $\Sp_n$-case and as on any side of the wall in the $\GL_n$-case.
This is essentially the same as in \cite[Sect~4.5]{CdV}: In the diagram from \cite[Sect~8]{CdV} in the $\Sp_n$-Brauer-case we have to put in the wall using their $\rho_\delta$ rather than our $\rho$ and omit the infinite tail $\land\cdots\land\vee\vee\cdots$ starting at the wall. The associated $(\land,\vee)$-sequence is then formed by the remaining single arrows to the left of the wall. In the $\GL_n$-walled Brauer-case we have to put in the walls using their $\rho_\delta$ rather than our $\rho$ ($(\rho_\delta)_i=\delta-i+1$ for $i\ge1$) and omit the infinite tail of $\vee$'s to the right of the wall above the line and the infinite tail of $\land$'s to the left of the wall below the line. The associated $(\land,\vee)$-sequence is then formed by the remaining single arrows between the walls.
We omit the infinite rays in the cap(-curl) diagram from \cite[Sect~8]{CdV}.
Then we can also define the $d$ and $p$ polynomials for $(\land,\vee)$-sequences\footnote{Of course, the meaning of $c_{\xi\eta}$, $d_{\xi\eta}$ and $p_{\xi\eta}$ depends on the case: $G=\GL_n$ or $G=\Sp_n$.} and we then have $$d_{\lambda\mu}=d_{\eta^1\xi^1}d_{\eta^2\xi^2}\,.$$
So the matrix $(d_{\lambda\mu})_{\lambda,\mu}$ is the Kronecker product of the matrices $(d_{\eta^1\xi^1})_{\eta^1,\xi^1}$ and $(d_{\eta^2\xi^2})_{\eta^2,\xi^2}$, where the $\eta^i$ and the $\xi^i$ vary over a strong conjugacy class when $G=\GL_n$ and over a conjugacy class when $G=\Sp_n$. But then the same must hold for their inverses, so we obtain: $$p_{\lambda\mu}=p_{\eta^1\xi^1}p_{\eta^2\xi^2}\,.$$
The analogues of Remark~\ref{rems.translation}.3, Lemma~\ref{lem.d_and_e_eqn}(ii) and Proposition~\ref{prop.p_and_r_eqn}(ii) for $d$ and $p$-polynomials associated to $(\land,\vee)$-sequences also hold.
Next we could define codiagrams and $e$ and $r$-polynomials for $(\land,\vee)$-sequences, but instead we use the order reversing involution $\dagger$ which replaces every arrow by its opposite, and then we have
$$e_{\lambda\mu}=d_{(\xi^1)^\dagger(\eta^1)^\dagger}d_{(\xi^2)^\dagger(\eta^2)^\dagger}\text{\ and\ }r_{\lambda\mu}=p_{(\xi^1)^\dagger(\eta^1)^\dagger}p_{(\xi^2)^\dagger(\eta^2)^\dagger}\,.$$
One can also define $\dagger$ on $\Lambda(s)$ and then obtain the identities
$$e_{\lambda\mu}=d_{\mu^\dagger\lambda^\dagger}\text{\ and\ }r_{\lambda\mu}=p_{\mu^\dagger\lambda^\dagger}\,,$$
but in the case of $G=\GL_n$ it is only clear that this works when $s_1=s_2$, since otherwise the values of $s_1$ and $s_2$ swap and the walls would move.
See also \cite[Cor to Thm~7.1]{T} and \cite[Cor to Thm~7.1]{LT}.

Finally, we point out that we have an explicit combinatorial formula for the $p_{\eta\xi}$ as in \cite[Sect~8]{CdV} (in the $\GL_n$-case see also \cite[Sect~5]{BS}).
In both cases we work with a single external/unbounded chamber (and omit all the infinite rays).
Of course in \cite[Sect~8]{CdV} (and \cite[Sect~5]{BS}) this expression is actually the definition of their $p$-polynomials, but one can prove as in \cite[Sect~8]{CdV} 
that this alternative definition leads to the same recursive relations as \eqref{eq.p_trans_proj1} and \eqref{eq.p_trans_proj2} for $(\land,\vee)$-sequences.
\end{remsgl}

\section{Tilting and injective resolutions}\label{s.resolutions}
We retain the notation and assumptions from the previous section. For $\lambda,\mu\in\Lambda$ define the integers $p^i_{\lambda\mu}, r^i_{\lambda\mu}\in\mb Z$ by
\begin{align*}
p_{\lambda\mu}(q)&=\sum_{i\ge0}p^i_{\lambda\mu}q^i\text{\ and}\\
r_{\lambda\mu}(q)&=\sum_{i\ge0}r^i_{\lambda\mu}q^i\,.
\end{align*}

The theorem below is the analogue of \cite[Thm~5.3]{BS} and \cite[Thm~9.1]{CdV} in our setting.
\begin{thmgl}\label{thm.tilting_resolution}
The induced module $\nabla(\lambda)$, $\lambda\in\Lambda$, has a finite left tilting resolution:
$$\cdots\to T^1(\lambda)\to T^0(\lambda)\to\nabla(\lambda)\to 0$$
where
$$T^i(\lambda)=\bigoplus_{\mu\in\Lambda} p^i_{\lambda\mu}T(\mu)\,.$$
\end{thmgl}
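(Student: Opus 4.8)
The plan is a double induction, with $|\Lambda|$ as the outer parameter and, for $\Lambda$ fixed, the height of $\lambda$ in the poset $(\Lambda,\preceq)$ as the inner parameter. Note first that $p^0_{\lambda\mu}=\delta_{\lambda\mu}$ (Remark~\ref{rems.polynomials}.1), so necessarily $T^0(\lambda)=T(\lambda)$, and the augmentation $T^0(\lambda)\to\nabla(\lambda)$ will be the canonical surjection coming from the quasihereditary structure of $\mc C_\Lambda$ (see Prop.~3.1(ii) of \cite{T,LT}). The base case is $\lambda$ being $\preceq$-minimal in $\Lambda$, equivalently $c_\lambda$ having no caps or curls: then $d_{\lambda\mu}=\delta_{\lambda\mu}$, hence $p_{\lambda\mu}=\delta_{\lambda\mu}$, while $[\nabla(\lambda):L(\mu)]\neq0$ forces $\mu\preceq\lambda$, so $\nabla(\lambda)$ is irreducible and $\nabla(\lambda)=\Delta(\lambda)=T(\lambda)$; thus $0\to T(\lambda)=\nabla(\lambda)\to\nabla(\lambda)\to0$ is the required resolution.

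For the inductive step assume $\lambda$ is not $\preceq$-minimal, so $c_\lambda$ has a cap or curl, and fix an innermost one. Each move of Remark~\ref{rems.translation}.2 comes from an equivalence $\mc C_\Lambda\simeq\mc C_{\Lambda_1}$ inducing an order-isomorphism $\Lambda\to\Lambda_1$, carrying $\nabla$'s to $\nabla$'s and $T$'s to $T$'s and preserving the $p$-polynomials (Lemma~\ref{lem.d_and_e_eqn}(i), Proposition~\ref{prop.p_and_r_eqn}(i)); in particular it preserves $|\Lambda|$ and the height of $\lambda$, and the truth of the theorem for $(\Lambda,\lambda)$ is equivalent to its truth for $(\Lambda_1,\lambda_1)$. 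Applying finitely many such moves I may therefore assume the chosen cap has consecutive endpoints (or is a curl at the first two nodes), so that Remark~\ref{rems.translation}.3 places us in the translation-projection situation of Proposition~\ref{prop.trans_projection} with $\lambda$ in the role of $\lambda^+$: there are $\lambda',\lambda^-\in\Lambda(s)$ with $\Lambda'=\Lambda(s)\cap H\cdot\lambda'$, $\lambda^-\prec\lambda^+=\lambda$, $\lambda^-\in\Lambda$, and $|\Lambda'|=\tfrac{1}{2}|\tilde\Lambda|<|\Lambda|$ since $\nu\mapsto\nu'$ is $2$-to-$1$ from $\tilde\Lambda\subseteq\Lambda$ onto $\Lambda'$. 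By the outer induction hypothesis the theorem holds for $(\Lambda',\lambda')$ and by the inner one it holds for $(\Lambda,\lambda^-)$.

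So there are finite tilting resolutions $P_\bullet\to\nabla(\lambda')$ in $\mc C_{\Lambda'}$ with $P_i=\bigoplus_{\mu'}p^i_{\lambda'\mu'}T(\mu')$ and $Q_\bullet\to\nabla(\lambda^-)$ in $\mc C_\Lambda$ with $Q_i=\bigoplus_\mu p^i_{\lambda^-\mu}T(\mu)$. Applying the exact functor $\widetilde T_{\lambda'}^\lambda$ to $P_\bullet$ and using $\widetilde T_{\lambda'}^\lambda T(\mu')=T(\mu^+)$ (Proposition~\ref{prop.trans_projection}) yields a finite tilting resolution $R_\bullet:=\widetilde T_{\lambda'}^\lambda P_\bullet\to M:=\widetilde T_{\lambda'}^\lambda\nabla(\lambda')$ with $R_i=\bigoplus_{\mu'}p^i_{\lambda'\mu'}T(\mu^+)$, while Remark~\ref{rems.translation}.1 supplies a short exact sequence $0\to\nabla(\lambda^-)\xrightarrow{\iota}M\to\nabla(\lambda)\to0$, so $M$ is $\nabla$-filtered. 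Next I would build a chain map $f_\bullet\colon Q_\bullet\to R_\bullet$ with $f_0$ lifting $Q_0\to\nabla(\lambda^-)\xrightarrow{\iota}M$ through $R_0\twoheadrightarrow M$ and, inductively, $f_i$ lifting $f_{i-1}d^Q_i$ through $R_i\twoheadrightarrow\ker(R_{i-1}\to R_{i-2})$: the only obstructions lie in groups $\Ext^1_{\mc C_\Lambda}(Q_i,Z)$ with $Z$ a syzygy of $M$ in $R_\bullet$, and since every $R_k$ is $\nabla$-filtered and every $Q_i$ is $\Delta$-filtered, dimension shifting turns these into groups $\Ext^{>0}_{\mc C_\Lambda}(Q_i,M)$, which vanish because $Q_i$ is $\Delta$-filtered and $M$ is $\nabla$-filtered. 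The mapping cone of $f_\bullet$ is then a finite complex of tilting modules whose $i$-th term is $R_i\oplus Q_{i-1}$ (with $Q_{-1}=0$); since $\iota$ is injective this cone is quasi-isomorphic to $\mathrm{coker}(\iota)=\nabla(\lambda)$ with cohomology only in degree $0$, hence it is a finite left tilting resolution $\cdots\to S_1\to S_0\to\nabla(\lambda)\to0$ with $S_i=R_i\oplus Q_{i-1}$.

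Finally one checks $S_i=T^i(\lambda)$: by \eqref{eq.p_trans_proj1}, $p^i_{\lambda\mu^+}=p^i_{\lambda'\mu'}+p^{i-1}_{\lambda^-\mu^+}$ for all $\mu^+$, and by \eqref{eq.p_trans_proj2}, $p^i_{\lambda\mu}=p^{i-1}_{\lambda^-\mu}$ for $\mu$ not of the form $\mu^+$, so
$$\bigoplus_{\mu\in\Lambda}p^i_{\lambda\mu}T(\mu)=\Bigl(\bigoplus_{\mu'\in\Lambda'}p^i_{\lambda'\mu'}T(\mu^+)\Bigr)\oplus\Bigl(\bigoplus_{\mu\in\Lambda}p^{i-1}_{\lambda^-\mu}T(\mu)\Bigr)=R_i\oplus Q_{i-1}=S_i\,.$$
I expect the main obstacle to be the bookkeeping that interlocks the double induction with the diagram reductions of Remarks~\ref{rems.translation}.2--\ref{rems.translation}.3 --- making sure that after the equivalences $\lambda$ truly occupies the role of $\lambda^+$ of a translation projection whose $\lambda^-$ is strictly lower in $(\Lambda,\preceq)$ and whose $\Lambda'$ is strictly smaller than $\Lambda$, so that both induction hypotheses genuinely apply. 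The homological content --- existence of the augmented lift $f_\bullet$ and the cone computation --- is then routine given the quasihereditary structure of $\mc C_\Lambda$ and the vanishing $\Ext^{>0}(\Delta\text{-filtered},\nabla\text{-filtered})=0$.
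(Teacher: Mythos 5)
Your proposal follows essentially the same route as the paper, which for this theorem simply points to Cox--De Visscher's Theorem 9.1 (replacing $\Delta,P,P^i,\mathrm{res}$ by $\nabla,T,T^i,\widetilde T_{\lambda'}^\lambda$) and gives full details only for the dual injective-resolution theorem: reduce by translation equivalences (Remark~\ref{rems.translation}.2) until the chosen cap has consecutive endpoints, use the translation projection (Remark~\ref{rems.translation}.3, Prop.~\ref{prop.trans_projection}) with $\lambda=\lambda^+$, translate the resolution of $\nabla(\lambda')$, extend $\iota\colon\nabla(\lambda^-)\hookrightarrow\widetilde T_{\lambda'}^\lambda\nabla(\lambda')$ to a chain map, form the cone (equivalently the total complex of the two-row double complex), and match terms via Proposition~\ref{prop.p_and_r_eqn}(ii). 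Your double induction on $(|\Lambda|,\text{height of }\lambda)$ is a harmless variant of the paper's induction on $\succ$ and on the number of caps/curls, and the final bookkeeping via \eqref{eq.p_trans_proj1}--\eqref{eq.p_trans_proj2} is exactly right.

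One step as written is off: you claim the obstruction groups $\Ext^1(Q_i,Z_j)$ (with $Z_j$ a syzygy of $M$ in $R_\bullet$) vanish by ``dimension shifting into $\Ext^{>0}(Q_i,M)$''. Dimension shifting along $0\to Z_j\to R_j\to Z_{j-1}\to0$ gives $\Ext^{k+1}(Q_i,Z_j)\cong\Ext^{k}(Q_i,Z_{j-1})$ only for $k\ge1$; it identifies $\Ext^{j+2}(Q_i,Z_j)$ with $\Ext^{1}(Q_i,M)$, not $\Ext^{1}(Q_i,Z_j)$ with anything involving $M$. The correct (and slightly stronger) statement, which is exactly what the paper's parenthetical ``$\Hom_G(T^i(\mu),-)$ maps short exact sequences of $\nabla$-filtered modules to exact sequences'' encodes, is that every syzygy $Z_j$ is itself $\nabla$-filtered. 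This is proved by descending induction from the tail of the finite complex: $Z_{n-1}\cong R_n$ is tilting, and if $Z_{j}$ is $\nabla$-filtered then from $0\to Z_{j}\to R_{j}\to Z_{j-1}\to0$ one gets $\Ext^1(\Delta(\nu),Z_{j-1})=0$ for all $\nu$ (using $\Ext^{\ge1}(\Delta,R_j)=0$ and $\Ext^{\ge1}(\Delta,Z_j)=0$), hence $Z_{j-1}$ is $\nabla$-filtered. Once that is in place, $\Ext^1(Q_i,Z_j)=0$ is immediate from $Q_i$ being $\Delta$-filtered, and your chain-map construction and cone computation go through as you describe.
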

\begin{proof}
The proof is very similar to that of \cite[Thm~9.1]{CdV}. One merely has to replace $\Delta_{(a)}(\mu)$, $P_{(a)}(\mu)$, $P^i_{(a)}(\mu)$ and ${\rm res}^\lambda_{(a+1)}$ in that proof by $\nabla(\mu)$, $T(\mu)$, $T^i(\mu)$ and $\widetilde T_{\lambda'}^\lambda $, and for the extension to a chain map
use the fact that $\Hom_G(T^i(\mu),-)$ maps short exact sequences of modules with a good filtration to exact sequences. 
We leave the details to the reader and give the proof of the next theorem in more detail.
\end{proof}

\begin{thmgl}\label{thm.injective_resolution}
The induced module $\nabla(\mu)$, $\mu\in\Lambda$, has a finite injective resolution in $\mc C_\Lambda$:
$$0\to \nabla(\mu)\to I^0(\mu)\to I^1(\mu)\to\cdots$$
where
$$I^i(\mu)=\bigoplus_{\lambda\in\Lambda} r^i_{\lambda\mu}I_\Lambda(\lambda)\,.$$
\end{thmgl}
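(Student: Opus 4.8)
The plan is to mirror the argument for Theorem~\ref{thm.tilting_resolution}, dualising throughout. Just as that proof followed \cite[Thm~9.1]{CdV}, I would run the proof of \cite[Thm~9.1]{CdV} (the injective version) in our truncated category $\mc C_\Lambda$, replacing $\Delta_{(a)}$-, $P_{(a)}$-, $P^i_{(a)}$- and ${\rm res}$-notation from \emph{op.~cit.} by $\nabla$, $I_\Lambda$, $I^i$ and the translation functors $\widetilde T$. Concretely, I would build the resolution by induction on the poset $(\Lambda,\preceq)$: one starts from the minimal weights, where $\nabla(\mu)=L(\mu)$ may still be a proper submodule of $I_\Lambda(\mu)$, and works upward, at each stage splicing in a short exact sequence obtained from Proposition~\ref{prop.trans_projection} and Remark~\ref{rems.translation}.1, namely the non-split extension $0\to\nabla(\nu^-)\to\widetilde T_{\lambda'}^\lambda\nabla(\nu')\to\nabla(\nu^+)\to 0$. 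The point is that the translation functors $\widetilde T_\lambda^{\lambda'}$ are exact and send indecomposable injectives to indecomposable injectives (up to the $0$ case) by Propositions~\ref{prop.trans_equivalence} and~\ref{prop.trans_projection}, so an injective resolution of $\nabla(\nu')$ in $\mc C_{\Lambda'}$ pulls back to one involving the $I_\Lambda(\lambda)$'s.

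The key steps, in order, would be: (1) reduce, using Remark~\ref{rems.translation}.2 together with Proposition~\ref{prop.p_and_r_eqn}(i), to the case where the relevant cap or curl in $co_\lambda$ (or rather in the codiagram picture governing the $r$-polynomials) has consecutive end nodes, so that Remark~\ref{rems.translation}.3 applies; (2) in that reduced situation, write down the translation-projection short exact sequence and feed it into a long exact sequence / chain-complex splicing argument to relate an injective resolution of $\nabla(\mu^-)$ in $\mc C_\Lambda$ to one of $\nabla(\mu')$ in $\mc C_{\Lambda'}$; (3) check that the multiplicities transform exactly as the $r$-polynomials do under the recursions \eqref{eq.r_trans_proj1} and \eqref{eq.r_trans_proj2} in Proposition~\ref{prop.p_and_r_eqn}(ii); (4) verify finiteness of the resolution, which follows from Remark~\ref{rems.polynomials}.2, since $r_{\lambda\mu}\neq0$ forces $\mu\preceq\lambda$ and the poset $\Lambda$ is finite, bounding the length by $\max_{\mu\preceq\lambda}n(\lambda,\mu)$. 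For the splicing one uses, dually to the tilting case, that $\Hom_G(-,I_\Lambda(\lambda))$ (equivalently $\Hom_G(-,\nabla(\lambda))$-type vanishing for modules with a Weyl/good filtration, cf.\ Remark~\ref{rems.translation}.1) turns the relevant short exact sequences into exact sequences, so that partial chain maps extend.

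**The main obstacle** I anticipate is the bookkeeping needed to propagate the resolution correctly through the translation-projection step: when $\lambda=\lambda^-$ one must glue an injective resolution of $\nabla(\mu^+)$ (which lives ``above'' $\mu^-$ in the new complex) onto a shifted copy of the resolution of $\nabla(\mu')$, and one must check that the connecting maps are chosen compatibly so that the total complex is genuinely a complex and genuinely resolves $\nabla(\mu^-)$ in $\mc C_\Lambda$ — not just after translation. This is exactly where \eqref{eq.r_trans_proj1}, with its extra term $qr_{\lambda^-\mu^+}$, has to be matched term-by-term with a cone/mapping-cylinder construction, and where one needs the vanishing of higher $\Ext_G^i(\nabla(\nu^+),\nabla(\nu^-))$ for $i>1$ from Remark~\ref{rems.translation}.1 to guarantee the extension of chain maps is unobstructed. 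The other routine-but-nontrivial point is to confirm that all modules produced genuinely lie in $\mc C_\Lambda$ (the Serre-subcategory hypotheses in the definition of $\widetilde T$), which is what restricts us to $\preceq$-saturated $\Lambda\subseteq\Lambda(s)$ and is already built into our standing assumptions. Everything else is the same formal homological-algebra argument as in \cite[Thm~9.1]{CdV} and the dual of Theorem~\ref{thm.tilting_resolution}, so I would, as in that proof, carry out the translation-projection reduction in detail and leave the remaining routine verifications to the reader.
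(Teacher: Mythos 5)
Your proposal captures the essentials of the paper's argument: reduce via translation equivalences to a cap with consecutive end nodes, use the non-split extension from translation projection, splice resolutions via a cone/total-complex construction, and match the resulting multiplicities against the recursions \eqref{eq.r_trans_proj1} and \eqref{eq.r_trans_proj2}. This is exactly what the paper does.

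However, the direction of the induction as you state it is backwards. You propose to ``start from the minimal weights, where $\nabla(\mu)=L(\mu)$ may still be a proper submodule of $I_\Lambda(\mu)$, and work upward.'' But at a minimal weight the desired resolution is not trivial, so that cannot be the base case. The paper's base case is the \emph{maximal} weights: when $co_\mu$ has no caps or curls, $\mu$ is maximal in $\Lambda$ and $\nabla(\mu)=I_\Lambda(\mu)$, giving a length-$0$ resolution. The induction then proceeds downward in $\preceq$ (with a secondary induction on the number of caps/curls in the codiagram, to handle $\mu'$ in the new orbit $\Lambda'$). The downward direction is forced by the mechanics of the splicing step: the extension $0\to\nabla(\mu^-)\to\widetilde T_{\mu'}^\mu\nabla(\mu')\to\nabla(\mu^+)\to 0$ produces a resolution of $\nabla(\mu^-)$ from resolutions of $\nabla(\mu')$ (fewer caps) and $\nabla(\mu^+)$ (with $\mu^+\succ\mu^-$), so both pieces must already be available. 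A minor further point: your appeal to the vanishing of $\Ext_G^i(\nabla(\nu^+),\nabla(\nu^-))$ for $i>1$ to extend the chain map is unnecessary — since the target row consists of injectives in $\mc C_\Lambda$, the map $f$ extends to a chain map by the universal property of injective resolutions, exactly as in the paper.
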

\begin{proof}
The proof follows \cite[Thm~9.1]{CdV} and \cite[Thms~6.1,7.1]{LT} (and \cite[Thms~6.1,7.1]{T}). 
We assume that the assertion holds for weights $\nu\in\Lambda$ with $\nu\succ\mu$ and weights whose cap-curl codiagram has fewer caps and curls in case $G=\Sp_n$ and whose cap codiagram has fewer caps in case $G=\GL_n$. If $co_\mu$ has no caps or curls, then $\mu$ is maximal in $\Lambda$ and $\nabla(\mu)=I_\Lambda(\mu)$. Now assume $co_\mu$ has a cap or curl.
After finitely many translation equivalences, see Remark~\ref{rems.translation}.2, we may assume that there is a cap in $co_\mu$ connecting consecutive vertices.
Fix such a cap. Then we are in the situation of Remark~\ref{rems.translation}.3 and we can write $\mu=\mu^-$. By the inductive assumption we have injective resolutions
\begin{align}\label{eq.res_mu'}
0\to \nabla(\mu')\to I^0(\mu')\to I^1(\mu')\to\cdots
\end{align}
in $\mc C_{\Lambda'}$ and
\begin{align}\label{eq.res_mu^+}
0\to \nabla(\mu^+)\to I^0(\mu^+)\to I^1(\mu^+)\to\cdots
\end{align}
in $\mc C_\Lambda$.
Recall from Remark~\ref{rems.translation}.1 in Section~\ref{ss.translation} that we have an exact sequence
\begin{align}\label{eq.extension}
0\to\nabla(\mu^-)\to\widetilde T_{\mu'}^\mu\nabla(\mu')\stackrel{f}{\to}\nabla(\mu^+)\to0
\end{align}

Applying $\widetilde T_{\mu'}^\mu$ to \eqref{eq.res_mu'} and extending $f$ to a chain map using \eqref{eq.res_mu^+} we obtain a commutative diagram with exact rows
$$
\hspace{-.2cm}
\xymatrix @R=15pt @C=15pt @M=6pt{
0\ar[r]&\widetilde T_{\mu'}^\mu\nabla(\mu')\ar[r]\ar[d]^f&
\widetilde T_{\mu'}^\mu I^0(\mu')\ar[r]\ar[d]&
\widetilde T_{\mu'}^\mu I^1(\mu')\ar[r]\ar[d]&\cdots\ar@{}[d]^{\qquad.}\\
0\ar[r]&\nabla(\mu^+)\ar[r]&
I^0(\mu^+)\ar[r]&
I^1(\mu^+)\ar[r]&\cdots
}
$$
We multiply all arrows in one of the rows by $-1$ to make the squares anti-commutative and then we extend the diagram to a double complex by adding zeros in all remaining rows.
Taking the total complex of this double complex gives a bounded exact complex
\begin{align*}
0\to\widetilde T_{\mu'}^\mu \nabla(\mu')\to \nabla(\mu^+)\oplus\widetilde T_{\mu'}^\mu I^0(\mu')\to\cdots\to I^i(\mu^+)\oplus\widetilde T_{\mu'}^\mu I^{i+1}(\mu')\to\cdots\,,
\end{align*}
see e.g. \cite[Ex~1.2.5]{Weib}. Using \eqref{eq.extension} we get a surjective chain map from the above complex to
$$0\to\nabla(\mu^+)\to\nabla(\mu^+)\to0\to\cdots\to0\to\cdots\,.$$
Taking the kernel we obtain an exact complex (see e.g. \cite[Ex~1.3.1]{Weib})
\begin{align}\label{eq.resolution}
0\to\nabla(\mu)\to\widetilde T_{\mu'}^\mu I^0(\mu')\to\cdots\to I^i(\mu^+)\oplus\widetilde T_{\mu'}^\mu I^{i+1}(\mu')\to\cdots\,.
\end{align}
By Proposition~\ref{prop.trans_projection} we have $\widetilde T_{\mu'}^\mu I^0(\mu')=\widetilde T_{\mu'}^\mu I_{\Lambda'}(\mu')=I_\Lambda(\mu)=I^0(\mu)$.
For $i\ge0$ we have by Propositions~\ref{prop.trans_projection} and \ref{prop.p_and_r_eqn}(ii) that
\begin{align*}
I^i(\mu^+)\oplus\widetilde T_{\mu'}^\mu I^{i+1}(\mu')&=\bigoplus_{\lambda\in\Lambda}r^i_{\lambda\mu^+}I_\Lambda(\lambda)\oplus\bigoplus_{\lambda'\in\Lambda'}r^{i+1}_{\lambda'\mu'}\widetilde T_{\mu'}^\mu I_{\Lambda'}(\lambda')\\
&=\bigoplus_{\lambda\in\Lambda} r^i_{\lambda\mu^+}I_\Lambda(\lambda)\oplus\bigoplus_{\lambda'\in\Lambda'}r^{i+1}_{\lambda'\mu'}I_\Lambda(\lambda^-)\\
&=\bigoplus_{\lambda^-\in\Lambda}(r^i_{\lambda^-\mu^+}+r^{i+1}_{\lambda'\mu'})I_\Lambda(\lambda^-)\oplus\bigoplus_{\lambda\in\Lambda,\lambda\ne\lambda^-} r^i_{\lambda\mu^+}I_\Lambda(\lambda)\\
&=\bigoplus_{\lambda^-\in\Lambda}r^{i+1}_{\lambda^-\mu^-}I_\Lambda(\lambda^-)\oplus\bigoplus_{\lambda\in\Lambda,\lambda\ne\lambda^-} r^{i+1}_{\lambda\mu^-}I_\Lambda(\lambda)\\
&=\bigoplus_{\lambda\in\Lambda} r^{i+1}_{\lambda\mu^-}I_\Lambda(\lambda)=I^{i+1}(\mu^-)=I^{i+1}(\mu)\,.
\end{align*}
If we substitute this in \eqref{eq.resolution}, then we obtain the required injective resolution of $\nabla(\mu)$ in $\mc C_\Lambda$.
\end{proof}

As in \cite[Cor~5.5]{BS} and \cite[Cor~9.3]{CdV} we obtain
\begin{corgl}
We have $r^i_{\lambda\mu}=\dim\Ext_G^i(L(\lambda),\nabla(\mu))$ for all $i\ge 0$.
Furthermore, $\Ext_G^i(L(\lambda),\nabla(\mu))=0$ unless $i\equiv n(\lambda,\mu)\ ({\rm mod}\,2)$.
\end{corgl}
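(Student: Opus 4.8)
The plan is to derive the corollary directly from the injective resolution in Theorem~\ref{thm.injective_resolution} by computing $\Ext^i_G(L(\lambda),\nabla(\mu))$ as the cohomology of the complex $\Hom_G(L(\lambda),I^\bullet(\mu))$. The crucial input is that the injective resolution lives inside the truncated category $\mc C_\Lambda$, which is a highest weight category in which the $I_\Lambda(\nu)$ are the indecomposable injectives. Since $\Lambda\subseteq\Lambda(s)$ is $\preceq$-saturated, $\Ext$-groups computed in $\mc C_\Lambda$ agree with those computed in the full category of $G$-modules (this is the standard fact that a saturated subcategory is closed under extensions, cf.\ \cite[Ch~A]{Jan}), so there is no ambiguity in the statement.

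\smallskip
\noindent\textbf{Main steps.} First I would observe that because each $I^i(\mu)=\bigoplus_\lambda r^i_{\lambda\mu}I_\Lambda(\lambda)$ is injective in $\mc C_\Lambda$, the complex $0\to\nabla(\mu)\to I^0(\mu)\to I^1(\mu)\to\cdots$ is an injective resolution, so
\begin{align*}
\Ext^i_G(L(\lambda),\nabla(\mu))=H^i\big(\Hom_G(L(\lambda),I^\bullet(\mu))\big)\,.
\end{align*}
Next, the key vanishing: I claim the complex $\Hom_G(L(\lambda),I^\bullet(\mu))$ has zero differentials, i.e.\ it is already its own cohomology, forcing $\dim\Ext^i_G(L(\lambda),\nabla(\mu))=\sum_{\nu}r^i_{\nu\mu}\dim\Hom_G(L(\lambda),I_\Lambda(\nu))=r^i_{\lambda\mu}$, using $\dim\Hom_G(L(\lambda),I_\Lambda(\nu))=\delta_{\lambda\nu}$ (the socle of $I_\Lambda(\nu)$ is $L(\nu)$). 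To see the differentials vanish one uses the parity statement from Remark~\ref{rems.polynomials}.2: the nonzero coefficients $r^i_{\nu\mu}$ all sit in degrees $i$ of a single parity, namely $i\equiv n(\nu,\mu)\pmod 2$; combined with the fact that in a chain map between two such ``linear'' complexes the components of $I^i$ contributing $L(\lambda)$ to the socle can only map nontrivially to components of $I^{i+1}$, and such a map would force $r^i_{\lambda\mu}$ and $r^{i+1}_{\lambda\mu}$ both nonzero, contradicting the single-parity property. This is exactly the argument of \cite[Cor~5.5]{BS} and \cite[Cor~9.3]{CdV}, which I would invoke, adapting the notation ($\Delta\rightsquigarrow\nabla$, projective covers $\rightsquigarrow$ injective hulls, $P_{(a)}\rightsquigarrow I_\Lambda$). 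The parity consequence $\Ext^i_G(L(\lambda),\nabla(\mu))=0$ unless $i\equiv n(\lambda,\mu)\pmod2$ is then immediate from $r_{\lambda\mu}\ne0\Rightarrow r^i_{\lambda\mu}=0$ for $i\not\equiv n(\lambda,\mu)\pmod 2$, and from $r_{\lambda\mu}=0\Leftrightarrow\mu\not\preceq\lambda$, again by Remark~\ref{rems.polynomials}.2.

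\smallskip
\noindent\textbf{The main obstacle} is justifying rigorously that the differentials in $\Hom_G(L(\lambda),I^\bullet(\mu))$ vanish, rather than merely that the Euler characteristic matches. This is really a ``Kazhdan--Lusztig theory'' (in the CPS sense) statement and the clean way to obtain it is the parity/linearity argument: one shows by the same $\succeq$-induction used to build the resolution that the resolution is a \emph{linear} (or \emph{Koszul-type}) resolution, meaning $r^i_{\lambda\mu}\ne 0$ only for $i$ of the fixed parity $n(\lambda,\mu)\bmod 2$, and that any morphism $I_\Lambda(\nu)\to I_\Lambda(\nu'')$ appearing in a differential has $\nu\ne\nu''$ with $\nu''\prec\nu$ (degree strictly increasing); since $\Hom_G(L(\lambda),-)$ picks out only the socle constituent $L(\lambda)$, and that constituent can appear in the socle of at most one indecomposable summand across the whole resolution in each relevant degree, the induced map on $\Hom_G(L(\lambda),-)$ is forced to be zero. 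Once this linearity is in hand — and it follows formally from the recursions \eqref{eq.r_trans_proj1}, \eqref{eq.r_trans_proj2} together with Remark~\ref{rems.polynomials}.2, exactly as in \cite{CdV} and \cite{BS} — the corollary drops out, and I would simply refer the reader to those sources for the bookkeeping.
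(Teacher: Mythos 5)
Your proposal is correct and takes essentially the same approach as the paper: apply $\Hom_G(L(\lambda),-)$ to the injective resolution of Theorem~\ref{thm.injective_resolution}, use $\dim\Hom_G(L(\lambda),I_\Lambda(\nu))=\delta_{\lambda\nu}$ to identify the $i$-th term of the resulting complex with $k^{r^i_{\lambda\mu}}$, and invoke the parity property from Remark~\ref{rems.polynomials}.2 to see that this complex is concentrated in degrees of a single parity, whence all differentials vanish. The extra discussion of ``linearity'' and degree-increasing differentials in your final paragraph is superfluous: once the complex is supported in one parity, every differential has either zero source or zero target, which is exactly the paper's (shorter) argument.
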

\begin{proof}
By Remark~\ref{rems.polynomials}.2 all nonzero terms in $r_{\lambda\mu}$ have degree of the same parity as $n(\lambda,\mu)$.
So after applying $\Hom_G(L(\lambda),-)$ to the injective resolution in Theorem~\ref{thm.injective_resolution} the nonzero modules in the resulting complex
all have degree of the same parity as $n(\lambda,\mu)$. Therefore, all differentials in the complex are 0, i.e. the complex equals its own cohomology.
\end{proof}

\begin{corgl}
Put $t^i_{\lambda\mu}=\dim\Ext_G^i(L(\lambda),L(\mu))$ for all $i\ge0$, and \break $t_{\lambda\mu}(q)=\sum_{i\ge0}t^i_{\lambda\mu}q^i\in\mb Z[q]$.
Then $t_{\lambda\mu}=\sum_{\nu\preceq\lambda,\mu} r_{\lambda\nu}r_{\mu\nu}$.
In particular, \break $\Ext_G^i(L(\lambda),L(\mu))=0$ unless $i\equiv n(\lambda,\mu)\ ({\rm mod}\,2)$.
\end{corgl}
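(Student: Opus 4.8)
The plan is to compute $\Ext^*_G(L(\lambda),L(\mu))$ by resolving the second argument via the injective resolution of Theorem~\ref{thm.injective_resolution} and resolving the first argument implicitly through the $\Ext$-vanishing already available. Concretely, I would start from the fact that, since $\mc C_\Lambda$ is the module category of the quasihereditary algebra $O_\Lambda(k[G])^*$, every $L(\nu)$ ($\nu\in\Lambda$) has a (possibly infinite, but cohomologically bounded) resolution and the $\Ext^i_G(L(\lambda),\nabla(\nu))$ are given by the previous corollary: $\dim\Ext^i_G(L(\lambda),\nabla(\nu))=r^i_{\lambda\nu}$, nonzero only when $i\equiv n(\lambda,\nu)\pmod 2$. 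So the first step is: apply $\Hom_G(L(\lambda),-)$ to the finite injective resolution $0\to\nabla(\mu)\to I^0(\mu)\to I^1(\mu)\to\cdots$ of Theorem~\ref{thm.injective_resolution}. Since $I^j(\mu)=\bigoplus_{\nu}r^j_{\nu\mu}I_\Lambda(\nu)$ and the $I_\Lambda(\nu)$ are injective in $\mc C_\Lambda$, this computes $\Ext^i_G(L(\lambda),\nabla(\mu))$ as the cohomology of the complex with $i$-th term $\bigoplus_\nu r^i_{\nu\mu}\Hom_G(L(\lambda),I_\Lambda(\nu))$; but we already know this cohomology, so the real content is elsewhere.

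The actual route is a standard spectral-sequence / double-complex argument relating $\Ext^*(L(\lambda),L(\mu))$ to $\Ext^*(L(\lambda),\nabla(\nu))$ for $\nu\preceq\lambda,\mu$. I would proceed as follows. First, use that $L(\mu)$ embeds in $\nabla(\mu)$ with quotient having composition factors $L(\nu)$, $\nu\prec\mu$; dually, $\nabla(\mu)$ has a filtration whose analysis, combined with the costandard structure, expresses $[\nabla(\mu):L(\nu)]$ via $e_{\mu\nu}(1)$ and hence the inverse relation expresses $L(\mu)$ in the Grothendieck group as $\sum_\nu (\text{coeff})\,\nabla(\nu)$ with signs governed by $r_{\mu\nu}(-q)$ at $q=1$. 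The cleaner device, matching \cite[Cor~5.5]{BS} and the cited corollary, is: $\dim\Ext^i_G(L(\lambda),L(\mu))$ is computed by taking the injective resolution $I^\bullet(\mu)$ of $\nabla(\mu)$, then further resolving to get from $\nabla$'s to $L$'s, i.e. use that in $\mc C_\Lambda$ one has a convergent spectral sequence or an iterated long exact sequence with
$$\Ext^i_G(L(\lambda),L(\mu))\ \cong\ \bigoplus_{\nu\preceq\lambda,\mu}\ \bigoplus_{a+b=i}\Ext^a_G(L(\lambda),\nabla(\nu))\otimes\big(\text{mult of }L(\mu)\text{-contribution}\big)\,,$$
where the parity constraint $\Ext^a_G(L(\lambda),\nabla(\nu))\ne0\Rightarrow a\equiv n(\lambda,\nu)$ and the analogous statement with $\mu$ in place of $\lambda$ force everything to collapse. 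Since $n(\lambda,\nu)+n(\mu,\nu)\equiv n(\lambda,\mu)\pmod 2$ (because $n(\lambda,\nu)=n(\lambda)-n(\nu)$ is additive), the total degree is $\equiv n(\lambda,\mu)$, giving the parity statement, and the dimension count gives exactly $t_{\lambda\mu}(q)=\sum_{\nu\preceq\lambda,\mu}r_{\lambda\nu}r_{\mu\nu}$ once one checks that the $r$'s assemble correctly — this is the "Kazhdan–Lusztig theory" identity of Cline–Parshall–Scott \cite{CPS}, whose abstract form says precisely that if costandard objects have $\Ext$-vanishing-with-parity against simples governed by polynomials $r_{\lambda\mu}$ (our previous corollary), then $\Ext^*$ between simples is the "convolution" $\sum_\nu r_{\lambda\nu}r_{\mu\nu}$.

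Therefore the concrete steps are: (1) invoke the previous corollary, $\dim\Ext^i_G(L(\lambda),\nabla(\nu))=r^i_{\lambda\nu}$ with parity $i\equiv n(\lambda,\nu)\pmod2$; (2) dually (or by the same argument applied to $\Delta$'s, using $T_{\lambda'}^\lambda$-adjunction as in Theorem~\ref{thm.tilting_resolution}), obtain $\dim\Ext^i_G(\Delta(\nu),L(\mu))=r^i_{\mu\nu}$ with the same parity — note here one needs the standard fact that $\Ext^i_G(\Delta(\nu),L(\mu))\cong\Ext^i_G(L(\mu),\nabla(\nu))$ in the highest-weight category $\mc C_\Lambda$; (3) combine via the "linkage/filtration" spectral sequence of a quasihereditary algebra: resolve $L(\lambda)$ by objects with a $\Delta$-filtration and $L(\mu)$ by objects with a $\nabla$-filtration (BGG-type resolutions, which exist by Theorem~\ref{thm.tilting_resolution} and its dual), so that $\Ext^i_G(L(\lambda),L(\mu))$ is the total cohomology of a double complex whose $(a,b)$-spot involves $\Ext$ between a $\Delta(\nu)$ and an $L(\mu)$, equivalently an $L(\lambda)$ and a $\nabla(\nu)$; (4) because of the parity constraints in (1)–(2) all differentials in that double complex vanish, so the spectral sequence degenerates and one reads off $t^i_{\lambda\mu}=\sum_{\nu}\sum_{a+b=i}r^a_{\lambda\nu}r^b_{\mu\nu}$, i.e. $t_{\lambda\mu}=\sum_{\nu\preceq\lambda,\mu}r_{\lambda\nu}r_{\mu\nu}$; (5) the parity corollary then follows since $n(\lambda,\nu)+n(\mu,\nu)=n(\lambda)+n(\mu)-2n(\nu)\equiv n(\lambda,\mu)\pmod2$. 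The main obstacle I expect is step (3): making precise which resolution of $L(\lambda)$ (resp.\ $L(\mu)$) to use and checking that the relevant $\Ext$-groups in the double complex really are the $r^\bullet_{\lambda\nu}$ and $r^\bullet_{\mu\nu}$ rather than something twisted — i.e.\ verifying the compatibility of the tilting resolution of $\nabla(\lambda)$ from Theorem~\ref{thm.tilting_resolution} with the costandard/standard duality so that the multiplicities come out as $p^\bullet$ converting to $r^\bullet$ correctly. This is exactly where \cite[Cor~5.5]{BS} and \cite[Cor~9.3]{CdV} do the bookkeeping, and I would follow their argument verbatim, replacing their data by ours ($\nabla,\Delta,T,I_\Lambda$ and the polynomials $p,r$) and citing the CPS framework \cite{CPS} for the abstract degeneration statement.
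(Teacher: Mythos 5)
Your proposal is correct and takes essentially the same route as the paper: the paper's proof simply observes that the previous corollary shows $\mc C_\Lambda$ (with length function $n(\cdot)$) has a Kazhdan--Lusztig theory in the sense of Cline--Parshall--Scott and then cites \cite[Cor~3.6(a)]{CPS}, and your steps (1)--(5) are precisely an unpacking of what that citation encapsulates (the parity constraints on $\Ext^\bullet(L(\lambda),\nabla(\nu))$ and, by duality, on $\Ext^\bullet(\Delta(\nu),L(\mu))$, forcing the relevant spectral sequence to degenerate). The bookkeeping you flag as the potential obstacle is exactly what the CPS framework handles abstractly, so invoking \cite[Cor~3.6(a)]{CPS} directly, as the paper does, is the cleaner way to finish.
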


\begin{proof}
By the previous corollary the category $\mc C_\Lambda$ with length function $\lambda\mapsto n(\lambda)$ (see Sect~\ref{ss.arrow_diagrams}) has a Kazhdan-Lusztig theory in the sense of \cite[3.3]{CPS}. So the result follows from \cite[Cor~3.6(a)]{CPS}. See also 2.12(2), 2.13(2), 4.13(3) and C.10 in \cite[Part II]{Jan}.
\end{proof}

\begin{corgl}
We have ${\rm ch}\,L(\lambda)=\sum_{\mu\preceq\lambda}r_{\lambda\mu}(-1)\chi(\mu)$.
\end{corgl}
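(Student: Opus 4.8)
The plan is to deduce the formula from the known decomposition numbers $[\nabla(\lambda):L(\mu)]$ by inverting the decomposition matrix. First I would recall that ${\rm ch}\,\nabla(\mu)=\chi(\mu)$ for every $\mu\in X^+$ and that the $\chi(\mu)$, $\mu\in X^+$, form a $\mb Z$-basis of $(\mb Z X)^W$, so ${\rm ch}\,L(\lambda)$ has a unique expansion $\sum_{\mu}c_{\lambda\mu}\chi(\mu)$ with $c_{\lambda\mu}\in\mb Z$. On the other hand, passing to the Grothendieck group, $\chi(\lambda)={\rm ch}\,\nabla(\lambda)=\sum_\mu[\nabla(\lambda):L(\mu)]\,{\rm ch}\,L(\mu)$.

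Next I would restrict everything to the set $\Lambda$. For $\lambda\in\Lambda$ the module $\nabla(\lambda)$ is a costandard module of the quasihereditary algebra $O_\Lambda(k[G])^*$ (see Section~\ref{ss.reductive_groups}), hence lies in $\mc C_\Lambda$, so all of its composition factors $L(\mu)$ satisfy $\mu\in\Lambda$; and by Remark~\ref{rem.previous_results} (which applies since $\Lambda\subseteq\Lambda(s)\subseteq\Lambda_p$) we have $[\nabla(\lambda):L(\mu)]=e_{\lambda\mu}(1)$ if $\mu\preceq\lambda$ and $0$ otherwise. Since $\Lambda$ is finite (being contained in $\Lambda(s)$, which is finite by the bounds $l(\lambda^h)\le s_h\le p-\lambda^h_1$), the matrix $E:=(e_{\lambda\mu}(1))_{\lambda,\mu\in\Lambda}$ is an honest finite matrix, lower uni-triangular for $\preceq$, and the two displayed identities say precisely that $E$ and $C:=(c_{\lambda\mu})_{\lambda,\mu\in\Lambda}$ are inverse to one another.

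Finally I would identify $C$ with $\bigl(r_{\lambda\mu}(-1)\bigr)_{\lambda,\mu\in\Lambda}$. By the definition of the polynomials $r_{\lambda\mu}$, the matrix $\bigl(r_{\lambda\mu}(q)\bigr)_{\lambda,\mu}$ over $\mb Z[q]$ is the inverse of $\bigl(e_{\lambda\mu}(-q)\bigr)_{\lambda,\mu}$; specialising $q=-1$ gives $\bigl(r_{\lambda\mu}(-1)\bigr)E=I$, so $\bigl(r_{\lambda\mu}(-1)\bigr)=E^{-1}=C$. Hence ${\rm ch}\,L(\lambda)=\sum_{\mu\in\Lambda}r_{\lambda\mu}(-1)\chi(\mu)$, and since $r_{\lambda\mu}\ne0\iff\mu\preceq\lambda$ by Remark~\ref{rems.polynomials}.2 this is the asserted identity.

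The argument is essentially routine matrix inversion, and I expect no serious obstacle; the one point to handle with a little care is the reduction to the finite index set $\Lambda$ — that no composition factor of $\nabla(\lambda)$ escapes $\Lambda$ — which is exactly what the costandard-module property together with the vanishing part of Remark~\ref{rem.previous_results} provides. An equivalent route would be to take the Euler characteristic in $(\mb Z X)^W$ of the finite injective resolution of Theorem~\ref{thm.injective_resolution}, using $\dim\Hom_G(L(\lambda),I_\Lambda(\nu))=\delta_{\lambda\nu}$; this reduces to the same inversion.
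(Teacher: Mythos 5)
Your proof is correct, but it takes a genuinely different route from the paper's. The paper deduces the character formula from its first corollary (which identifies $r^i_{\lambda\mu}$ with $\dim\Ext^i_G(L(\lambda),\nabla(\mu))$, itself a consequence of the injective resolution of Theorem~\ref{thm.injective_resolution} together with a parity argument) and then cites the Euler-characteristic formula~\cite[II.6.21(6)]{Jan}, which expresses ${\rm ch}\,L(\lambda)$ as an alternating sum of Ext-dimensions against the $\chi(\mu)$. Your argument instead works purely at the level of decomposition numbers: you take the known fact $[\nabla(\lambda):L(\mu)]=e_{\lambda\mu}(1)$ from Remark~\ref{rem.previous_results}, observe that the transition matrix from the $\{\chi(\mu)\}$ basis to the $\{{\rm ch}\,L(\mu)\}$ basis of the character ring restricted to $\Lambda$ is $(e_{\lambda\mu}(1))$, and invert it using the defining property $(r_{\lambda\mu})=(e_{\lambda\mu}(-q))^{-1}$ specialized at $q=-1$. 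What your route buys is independence from the injective-resolution machinery and from the first corollary: the statement follows already from Theorem~7.1 of~\cite{LT}/\cite{T} and the definition of the $r$-polynomials, with no Ext-groups or parity arguments involved. What the paper's route buys is brevity at this point in the text (one line, given that the first corollary is already in hand) and it foregrounds the homological interpretation of the $r$-polynomials. Your remark that one could alternatively take the Euler characteristic of the finite injective resolution and use $\dim\Hom_G(L(\lambda),I_\Lambda(\nu))=\delta_{\lambda\nu}$ is essentially the paper's argument unwound, so you have in effect identified both proofs.
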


\begin{proof}
This follows from\cite[II.6.21(6)]{Jan} and the first corollary.
\end{proof}

\begin{remgl}
We compare $\Lambda_p$, which is the union of the various $\Lambda(s)$, with the Jantzen region of Lusztig's conjecture \cite[II.8.22]{Jan}. For the Jantzen region to be nonempty we clearly need $p\ge h-1$, where $h$ is the Coxeter number. Furthermore, the Jantzen region contains all restricted dominant weights when \hbox{$p\ge 2h-3$.} Our set $\Lambda_p$ is always nonempty, but for $G=\Sp_n$ it contains only a small portion of the restricted dominant weights and for $G=\GL_n$ only a small portion of the restricted dominant  $\SL_n$-weights lift to a weight in $\Lambda_p$.
\end{remgl}

\section{Limiting results for a fixed residue of $n$ mod $p$ }\label{s.limiting_case}
Throughout this section $\delta$ is any integer. We want to derive a certain stability result for arrow diagrams when $p\gg0$. For $\GL_n$ we cyclically shift the diagram such that the $0$-node is in the middle and then the idea is that we don't want arrows to move from one end of the diagram to the other end. Then the ``relevant $(\land,\vee)$-sequence" will be between the two walls.
For $\Sp_n$ and $\delta$ even the position of the wall relative to the first node is fixed and we don't want arrows to move around the right end node, so the ``relevant $(\land,\vee)$-sequence" is to the left of the wall. For $\Sp_n$ and $\delta$ odd the position of the wall relative to the last node is fixed and we don't want arrows to move around the left end node, so the ``relevant $(\land,\vee)$-sequence" is the second $(\land,\vee)$-sequence which comes from the arrows to the right of the wall.

\medskip

\noindent {\bf The $\GL_n$-case}. Let $r_1,r_2,s_1,s_2$ be integers $\ge0$, and let $\Lambda$ consist of pairs of partitions $(\lambda^1,\lambda^2)$
with $\lambda^i_1\le r_i$ and $l(\lambda^i)\le s_i$ for all $i\in\{1,2\}$. Choose a prime $p>2$ such that
\begin{align*}
\delta-s_1+1,1-r_2&\ge-(p-1)/2\text{\ and}\\
s_2,\delta+r_1&\le(p-1)/2\,.
\end{align*}
Choose $t\ge0$ such that $n:=\delta+tp\ge s_1+s_2$. We now identify $\Lambda$ with the set $\{[\lambda^1,\lambda^2]\,|\,(\lambda^1,\lambda^2)\in\Lambda\}$.
Then $\rho$ is defined as in Section~\ref{ss.reductive_groups}.
Now we change the labels in the arrow diagram by replacing each label by the integer in $\{-(p-1)/2,\cdots,(p-1)/2\}$ that is equal to it ${\rm mod}\ p$, and
we cyclically shift the diagram such that the first node has label $-(p-1)/2$.
Then the labels of the arrows corresponding to $[\lambda^1,\lambda^2]$ stay the same when we increase $p$, keeping $t$ (but not $n$!) fixed. They are 
$\delta+\lambda^1_1,\cdots,\delta-s_1+1+\lambda^1_{s_1}$ ($\wedge$, below the line), and 
$1-\lambda^2_1,\cdots,s_2-\lambda^2_{s_2}$ ($\vee$, above the line).
So this gives a limiting diagram with infinitely many nodes which is essentially the same as the diagram in \cite{CdV} for $(\lambda^1,\lambda^2)$ and a walled Brauer algebra $B_{u,v}(\delta)$ with $(\lambda^1,\lambda^2)$ in its label set.\footnote{Our label $(\lambda^1,\lambda^2)$ corresponds to the label $(\lambda^2,\lambda^1)$ in the notation of \cite{CdV}, and $B_{u,v}(\delta)$ corresponds to $B_{v,u}(\delta)$. In characteristic $0$ we still stick with our notation.}
See Remark~\ref{rems.polynomials}.3 how to adapt the diagram in \cite{CdV} to our conventions.
Because of the characterisation of $\preceq$ in terms of arrow diagrams in Section~\ref{ss.arrow_diagrams} it is now clear that the order $\preceq$ on $\Lambda$ is independent of $p$.

If we assume that $n\ge u+v$ and that $\Lambda$ consists of the pairs of partitions $(\lambda^1,\lambda^2)$ with $|\lambda^1|\le u$, $|\lambda^2|\le v$ and $u-|\lambda^1|=v-|\lambda^2|$, then we can use the rational Schur functor
$$f_{rat}:{\rm mod}(S(n;u,v))\to{\rm mod}(\ov B_{u,v}(\delta))\,,$$
where $S(n;u,v)$ is the rational Schur algebra and $\ov B_{u,v}(\delta)$ is the walled Brauer algebra in characteristic $p$, and deduce using arrow diagrams that for big $p$ the decomposition numbers of $\ov B_{r,s}$ are independent of $p$ and equal to the decomposition numbers of $B_{r,s}(\delta)$ in characteristic $0$. See \cite[Cor to Thm~6.1 and Prop~8.3]{T} and \cite[Thm 4.10]{CdV}.

\medskip

\noindent {\bf The $\Sp_n$-case}. Let $r,s$ be integers $\ge0$, and let $\Lambda$ consist of partitions $\lambda^1$ with $\lambda_1\le r$ and $l(\lambda)\le s$.
Choose a prime $p>2$ such that
\begin{align*}
-\delta/2-s+1&>-p/2\\ 
-\delta/2+\lambda_1&< p/2 
\end{align*}

Assume first $\delta$ is even. Then
\begin{align*}
-\delta/2-s+1&\ge-(p-1)/2\text{\ and}\\
-\delta/2+\lambda_1&\le(p-1)/2\,.
\end{align*}
Choose $t>0$ such that $m:=-\delta/2+tp\ge s$. Then $\rho$ is defined as in Section~\ref{ss.reductive_groups}.
Then we can increase $p$ and all arrows stay in the same position relative to the first node: 
Their labels are $-\delta/2+\lambda_1,-\delta/2-1+\lambda_2,\cdots,-\delta/2-s+1+\lambda_s$.
So this gives a limiting diagram with infinitely many nodes which is essentially the same as the diagram in \cite{CdV} for $\lambda$ and a Brauer algebra $B_u(\delta)$ in characteristic 0 with $\lambda$ in its label set.\footnote{The corresponding standard or irreducible module of $B_u(\delta)$ has the transpose $\lambda^T$ of $\lambda$ as label.} See Remark~\ref{rems.polynomials}.3 how to adapt the diagram in \cite{CdV} to our conventions.

Now assume $\delta$ is odd. Put $\ov\delta=\delta-p$. Then
\begin{align*}
-\ov\delta/2-s+1>0\text{\ and}\\
-\ov\delta/2+r< p\,.
\end{align*}
Choose $t\ge0$ such that $m:=-\ov\delta/2+tp\ge s$. Then $\rho$ is defined as in Section~\ref{ss.reductive_groups}.
Now change the labels by adding $p$ to the labels above the line, giving the first node top label $p$ and bottom label 0.
Then the labels of the arrows corresponding to $\lambda$ are $-\ov\delta/2+\lambda_1,-\ov\delta/2-1+\lambda_2,\cdots,-\ov\delta/2-s+1+\lambda_s$.
Then we can increase $p$ and all arrows stay in the same position relative to the last node.
We now subtract $p/2$ from all labels. Then the labels of the arrows will stay the same when we increase $p$:
$-\delta/2+\lambda_1,-\delta/2-1+\lambda_2,\cdots,-\delta/2-s+1+\lambda_s$.
If we now rotate the diagram 180 degrees we obtain the limiting diagram with infinitely many nodes which is essentially the same as the diagram in \cite{CdV} for $\lambda$ and a Brauer algebra $B_u(\delta)$ in characteristic 0 with $\lambda$ in its label set. See Remark~\ref{rems.polynomials}.3 how to adapt the diagram in \cite{CdV} to our conventions.

Now assume again that $\delta$ is arbitrary and $G=\Sp_n$. Because of the characterisation of $\preceq$ in terms of arrow diagrams in Section~\ref{ss.arrow_diagrams} it is now clear that the order $\preceq$ on $\Lambda$ is independent of $p$. If we assume that $m\ge u$ and that $\Lambda$ consists of the partitions $\lambda$ with $|\lambda|\le u$, and $u-|\lambda|$ even, then we can use the symplectic Schur functor
$$f_0:{\rm mod}(S_0(n,u))\to{\rm mod}(\ov B_u(\delta))\,,$$
where $S_0(n,u)$ is the symplectic Schur algebra and $\ov B_u(\delta)$ is the Brauer algebra in characteristic $p$, and deduce using arrow diagrams that for big $p$ the decomposition numbers of $\ov B_u$ are independent of $p$ and equal to the decomposition numbers of $B_u(\delta)$ in characteristic $0$. See \cite[Prop 2.1]{DT}, \cite[Cor to Thm~6.1]{LT} and \cite[Thm 5.8]{CdV}.

\section{Declarations}
\subsection*{Competing interests}
The author has no competing interests to declare that are relevant to the content of this article.
\subsection*{Data availability}
Data sharing is not applicable to this article as no datasets were generated or analysed during the current study.

\end{document}